\documentclass[a4,12pt]{amsart}

\usepackage[T1]{fontenc}
\usepackage[utf8]{inputenc}
\usepackage{amsmath,amssymb,amsthm,mathtools}
\usepackage{enumitem}
\usepackage[hidelinks]{hyperref}

\newtheorem{theorem}{Theorem}[section]
\newtheorem{proposition}[theorem]{Proposition}
\newtheorem{lemma}[theorem]{Lemma}
\newtheorem{corollary}[theorem]{Corollary}
\newtheorem{conjecture}[theorem]{Conjecture}
\newtheorem*{claim}{Claim}
\theoremstyle{definition}
\newtheorem{definition}[theorem]{Definition}
\newtheorem{setting}[theorem]{Setting}
\newtheorem{example}[theorem]{Example}
\newtheorem{remark}[theorem]{Remark}

\numberwithin{equation}{section}

\newcommand{\NN}{\mathbb N}
\newcommand{\ZZ}{\mathbb Z}
\newcommand{\PF}{\operatorname{PF}}
\newcommand{\Ap}{\operatorname{Ap}}
\newcommand{\Ker}{\operatorname{Ker}}
\newcommand{\htop}{\operatorname{ht}}

\newcommand{\Zfac}{\mathsf Z}
\newcommand{\X}{\mathsf X}
\newcommand{\rmI}{\operatorname{I}}

\title[Arithmetic pseudo-Frobenius numbers]
{ARITHMETIC PSEUDO-FROBENIUS NUMBERS AND DETERMINANTAL NUMERICAL SEMIGROUP RINGS}

\author{DO VAN KIEN}

\address{Department of Mathematics, Hanoi Pedagogical University 2, Xuan Hoa, Phu Tho, Vietnam}
\email{dovankien@hpu2.edu.vn}

\subjclass[2020]{Primary 13D02; Secondary 13A02, 20M25}
\keywords{Numerical semigroup, Pseudo-Frobenius number, Defining ideal,
Determinantal ideal, Affine map, Eagon--Northcott complex}

\begin{document}

\begin{abstract}
Let \(H=\langle a_1,\ldots,a_n\rangle\) be a numerical semigroup. A conjecture on numerical semigroup rings predicts that the defining ideal \(I_H\) has a determinantal presentation if and only if the set of pseudo-Frobenius numbers of \(H\) has form $\{h+\alpha,h+2\alpha,\ldots,h+(n-1)\alpha\}$ for $h\ge 0, \alpha>0$. We prove this conjecture under the condition that \(h\) has a unique factorization in \(H\). We also give characterization this
unique-factorization condition. As an application, we consider the family of affine-orbit numerical semigroups determined by \(m=cA_{n-1}+1\), \(1\le c\le a\). We prove that the relevant element \(h\) has a unique factorization and obtain
\[
I_H=
\rmI_2\begin{pmatrix}
X_1^a&x_2^a&\cdots&X_{n-1}^a&X_n^c\\
X_2&x_3&\cdots&X_n&X_1^{d+c+1-a}
\end{pmatrix}.
\]
In particular, the Eagon--Northcott complex of the matrix mentioned above gives the graded minimal free resolution of \(k[H]\).
\end{abstract}

\maketitle

\section{Introduction}

Let $H=\langle a_1,\ldots,a_n\rangle$
be a numerical semigroup minimally generated by \(n\ge3\) positive integers, and let $k[H]=k[t^{a_1},\ldots,t^{a_n}]$
be its semigroup ring over a field \(k\). Let
$S=k[X_1,\ldots,X_n]$ be the weighted polynomial ring with $	\deg X_i=a_i$ for every $1\le i\le n$ and consider the graded \(k\)-algebra homomorphism $\varphi:S\longrightarrow k[H]$ given by
$\varphi(X_i)=t^{a_i}$ for all $i$. The ideal $I_H=\Ker\varphi$ is called the {\it defining ideal} of the numerical semigroup ring \(k[H]\). Determining the structure of \(I_H\) is a classical problem in the study of monomial curves and numerical semigroup rings. There are a few known results about the structure of $I_H$. The most important result is due to J. Herzog \cite{Herzog}  when $n = 3$, but for semigroups with
more generators, the situation becomes significantly more complex. When $n=4$, there are some partial answers to describe the structure of the defining ideal: e.g. symmetric semigroups (\cite{Bresinsky}), pseudo-symmetric semigroups (\cite{Komeda}), and almost symmetric semigroups (\cite{Eto}). For $n(\ge 4)$-generated cases, research has primarily focused on specific classes, such as those generated by arithmetic sequences \cite{GSS} or repunit numbers \cite{BCO, RBT}, making a comprehensive understanding still unattainable.

In general, explicitly determining $I_H$ presents substantial difficulty, even when its numerical invariants have been established. A particularly useful numerical invariant is the set of pseudo-Frobenius numbers
\[
\PF(H)=
\{f\in\ZZ\setminus H\mid f+s\in H\text{ for every }0<s\in H\}.
\]
These numbers determine the degrees of the minimal generators of the canonical module of \(k[H]\); in particular,  the Cohen-Macaulay type $\rm{r}(k[H])$ of $k[H]$ is equal to $\sharp \PF(H)$ (see \cite{GW}). Motivated by the interaction between pseudo-Frobenius numbers and defining ideals established in \cite{GKMT,KM}, the following
conjecture was formulated by the author together with D. T. Cuong, N. Matsuoka, and H. L Truong  (see \cite[Conjecture~1.1]{KMO}).

\begin{conjecture}\label{conj:main}
Let \(H=\langle a_1,\ldots,a_n\rangle\) be minimally generated by \(n\ge3\)
elements. The following conditions are equivalent.
\begin{enumerate}
\item[(1)] After a suitable permutation of the generators,
\[
I_H=
\rmI_2\begin{pmatrix}
X_1^{\ell_1}&X_2^{\ell_2}&\cdots&X_n^{\ell_n}\\
X_2^{m_2}&X_3^{m_3}&\cdots&X_1^{m_1}
\end{pmatrix}
\]
for some positive integers
\(\ell_1,\ldots,\ell_n,m_1,\ldots,m_n\).
\item [(2)] There exist \(h\ge0\) and \(\alpha>0\) such that
\[
\PF(H)=\{h+\alpha,h+2\alpha,\ldots,h+(n-1)\alpha\}.
\]
\end{enumerate}
\end{conjecture}
\noindent Here, $\rmI_2(M)$ denotes the ideal of $S$ generated by $2\times 2$ minors of a matrix $M$ whose entries are in $S$.

The implication \((1)\Rightarrow(2)\) has already been established in \cite{KM}. The converse remains open in general, although it has been proved for several classes, including almost
symmetric numerical semigroups \cite{GKMT}, numerical semigroups of maximal embedding dimension \cite{KM}, stretched numerical semigroup rings \cite{KMO}, generalized repunit numerical semigroups \cite{BCO,CO}, and certain families satisfying restrictions involving the multiplicity and the embedding
dimension \cite{Takahashi}.

The main result of this paper gives a general sufficient condition for the
converse. Assume that
\[
\PF(H)=\{h+\alpha,h+2\alpha,\ldots,h+(n-1)\alpha\}.
\]
For \(1\le i\le n\), put
\[
m_i=\min\{\ell>0\mid\ell a_i+\alpha\in H\},\, M_i=m_i-1,
\]
and set
\[
h'=M_1 a_1+M_2 a_2+\cdots+M_n a_n.
\]
We first show that
\[
M_i=
\max\{q_i\mid (q_1,\ldots,q_n)\in\Zfac(h)\},
\]
where \(\Zfac(h)\) denotes the set of factorizations of \(h\). This yields the equivalences
\[
h\text{ has a unique factorization}
\quad\Longleftrightarrow\quad
h'=h
\quad\Longleftrightarrow\quad
h'+\alpha\notin H.
\]
Thus a homological problem concerning \(I_H\) is reduced to the factorization of a single element of \(H\).

The central theorem is the following.

\begin{theorem}[Theorem \ref{thm:general}]\label{thm:intro-general}
Suppose that $\PF(H)=\{h+\alpha,h+2\alpha,\ldots,h+(n-1)\alpha\}$
and that $h=q_1a_1+\cdots+q_na_n$ is the unique factorization of \(h\) in $H$ . Then, after a suitable permutation of
\(a_1,\ldots,a_n\), there exist positive integers
\(r_1,\ldots,r_n\) such that
\[
(q_i+1)a_i+\alpha=r_{i+1}a_{i+1}, 1\le\forall i\le n,
\]
with indices taken modulo \(n\). Consequently,
\[
I_H=
\rmI_2\begin{pmatrix}
X_1^{q_1+1}&X_2^{q_2+1}&\cdots&X_n^{q_n+1}\\
X_2^{r_2}&X_3^{r_3}&\cdots&X_1^{r_1}
\end{pmatrix}.
\]
In particular, the graded minimal $S$-free resolution of $k[H]$ is given by the Eagon--Northcott complex of the matrix mentioned above.
\end{theorem}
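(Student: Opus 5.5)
The plan has three stages: pin down the exponents \(q_i\), build the cyclic relations, and then identify \(I_H\) with the ideal of \(2\times2\) minors.

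\emph{Identifying the exponents.} First I would invoke the characterization of \(M_i\) established earlier: \(M_i=\max\{q_i\mid (q_1,\ldots,q_n)\in\Zfac(h)\}\). Under unique factorization this gives \(M_i=q_i\). Hence \(m_i=q_i+1\) is the least \(\ell>0\) with \(\ell a_i+\alpha\in H\). In particular \((q_i+1)a_i+\alpha\in H\), while \(q_ia_i+\alpha\notin H\).

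\emph{Building the cyclic relations.} Take any factorization \((q_i+1)a_i+\alpha=\sum_j p_ja_j\). If \(p_i>0\), subtracting \(a_i\) gives \(q_ia_i+\alpha\in H\), contradicting the minimality of \(m_i\); so \(p_i=0\).

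Next I must show that exactly one \(p_j\) is nonzero. Call that index \(\sigma(i)\) and write \((q_i+1)a_i+\alpha=r_{\sigma(i)}a_{\sigma(i)}\). To do this I would add the remaining summands \(\sum_{k\ne i}q_ka_k\) to both sides. This gives
\[
h+\alpha+a_i=\sum_{k\ne i}(q_k+p_k)a_k .
\]
I would then exploit that \(h+\alpha,\ldots,h+(n-1)\alpha\) are exactly the pseudo-Frobenius numbers. Concretely, suppose \(p_j,p_k>0\) with \(j\ne k\). Then \(h+\alpha+a_i-a_j\) lies in \(H\), and comparing with the maximal elements \(h+t\alpha+w\) of \(\Ap(H,w)\) for a suitable generator \(w\) should force a second factorization of \(h\) or of some \(h+t\alpha\). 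That contradicts uniqueness, or the fact that \(h+t\alpha\notin H\).

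Once \(\sigma\) is defined, it must be a bijection of \(\{1,\ldots,n\}\) with no fixed points. If \(\sigma\) decomposed into several cycles, I would sum the relations along one cycle \(C\):
\[
\sum_{i\in C}(q_i+1-r_i)a_i+|C|\alpha=0 .
\]
Playing this against the relations from the other cycles should produce an element \(h+t\alpha\in H\) with \(1\le t\le n-1\), which is impossible. So \(\sigma\) is a single \(n\)-cycle, and after relabelling \(\sigma(i)=i+1\). I expect this step, showing single-term right-hand sides and a single cycle, to be the main obstacle. I would first try to extract it directly from the Apéry-set description of \(\PF(H)\) in terms of \(h\) and \(\alpha\), which is presumably developed in the body of the paper.

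\emph{Identifying the ideal.} Let \(M\) be the matrix in the statement and put \(I=\rmI_2(M)\). The cyclic relations give \(\deg X_i^{q_i+1}X_{i+1}^{r_{i+1}}\)-type equalities, namely \((q_i+1)a_i+r_ja_j=(q_j+1)a_j+r_{i+1}a_{i+1}\) after subtracting \(\alpha\), so every minor is homogeneous and vanishes under \(\varphi\). Hence \(I\subseteq I_H\).

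For equality I would use the standard argument from \cite{KM,GKMT}. Setting \(X_1=0\), the ring \(S/(I+X_1S)\) is spanned by the monomials \(X_2^{b_2}\cdots X_n^{b_n}\) with \(b_i\le q_i\), together with the powers \(X_2^{b}\), \(b<r_2\), and similar. Its length can be computed directly and equals \(a_1=\ell(k[H]/t^{a_1}k[H])\).

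Separately, \(I\) is a height-\((n-1)\) determinantal ideal. The height should follow since \(\sqrt{I+X_1S}\) is the maximal ideal. Hence \(S/I\) is Cohen–Macaulay by Eagon–Northcott, and \(X_1\) is a nonzerodivisor on it. The surjection \(S/I\to k[H]\) then induces an isomorphism, since modulo \(X_1\) both sides have the same length. Therefore \(I=I_H\).

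The Eagon–Northcott complex resolves \(S/I\) because the ideal of maximal minors of \(M\) has the generic height \(n-1\). Its minimality holds because all entries of \(M\) lie in \((X_1,\ldots,X_n)\).
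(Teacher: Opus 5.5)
\textbf{Gap 1: the cyclic relations.} There is a genuine gap at exactly the step you flagged. You want to prove first that every factorization of $c_i=(q_i+1)a_i+\alpha$ uses a single generator, then that $\sigma$ is a bijection, then that $\sigma$ is one cycle. For the first step you only say that comparing with maximal elements of $\Ap(H,w)$ ``should force'' a second factorization; that is not an argument. Bijectivity of $\sigma$ is simply asserted.

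The missing idea is that single-termness need not be proved first. Put $T_i=\{j\mid c_i-a_j\in H\}$. It is nonempty since $0<c_i\in H$, and $i\notin T_i$ since $c_i-a_i=q_ia_i+\alpha\notin H$. Choose \emph{any} $\sigma(i)\in T_i$ and write $c_i=a_{\sigma(i)}+u_i$ with $u_i\in H$. The self-map $i\mapsto\sigma(i)$ has a directed cycle $C$. Summing over $C$, the terms $a_{\sigma(i)}$ cancel against $\sum_{i\in C}a_i$, so $\sum_{i\in C}q_ia_i+|C|\alpha\in H$. Adding $\sum_{i\notin C}q_ia_i$ gives $h+|C|\alpha\in H$, which forces $|C|=n$.

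Since this holds for every admissible choice of $\sigma$, each $T_i$ is a singleton: redirecting one arrow of the $n$-cycle to a second element of $T_i$ would create a shorter cycle. This yields $c_i=r_{\sigma(i)}a_{\sigma(i)}$ and the single $n$-cycle at once. Your cycle step is also easier than you suggest. Once $c_i=r_{\sigma(i)}a_{\sigma(i)}$ is known, $r_i\ge1$ gives $\sum_{i\in C}q_ia_i+|C|\alpha=\sum_{i\in C}(r_i-1)a_i\in H$ directly, with no appeal to other cycles.

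\textbf{Gap 2: identifying the ideal.} The length you propose to compute is
\[
\ell_S\bigl(S/(I+X_1S)\bigr)=\Sigma:=(q_2+1)\cdots(q_n+1)+r_2(q_3+1)\cdots(q_n+1)+\cdots+r_2\cdots r_n .
\]
Chaining the relations $(q_i+1)a_i+\alpha=r_{i+1}a_{i+1}$ around the cycle gives $\alpha\Sigma=a_1\bigl(\prod_i r_i-\prod_i(q_i+1)\bigr)$. So ``the length equals $a_1$'' is the identity $\prod_i r_i-\prod_i(q_i+1)=\alpha$, and this does not follow from the cyclic relations. For example, with $H=\langle3,4,5\rangle$, the matrix with rows $X_1^2,X_2^2,X_3^2$ and $X_2^2,X_3^2,X_1^4$ has constant column degree difference $2$, but its length modulo $X_1$ is $12\ne3$.

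In fact, given $I\subseteq I_H$ with $X_1$ regular on $S/I$, the equality $\Sigma=a_1$ is equivalent to $I=I_H$. So this claim is the whole content of the equality and must use ($\star$) again. The paper does this by comparing canonical modules. The Eagon--Northcott complex shows $\mathrm K_{S/I}$ is generated in degrees $-h-j\alpha$ for $1\le j\le n-1$, exactly as $\mathrm K_{k[H]}$ is. Since $(j-i)\alpha\notin H$ for $1\le j-i\le n-2$, the injection $\mathrm K_{k[H]}\to\mathrm K_{S/I}$ is surjective, which forces $I_H/I=0$. Your height and Eagon--Northcott steps are fine and match the paper.
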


The interesting point is that the unique factorization of \(h\), together with the arithmetic structure of \(\PF(H)\), forces the relations
\[
(q_i+1)a_i+\alpha\in H
\]
to form a single directed cycle through all the minimal generators. This
produces the cyclic monomial matrix. The case \(h=0\) is compatible with the
determinantal situation studied in \cite{GKMT}; the theorem above applies to an arbitrary nonnegative translation \(h\). In particular, if $H$ has maximal embedding dimension, we recover the statement \cite[Theorem 2 $(2)\Rightarrow (4)$]{KM}.

We then apply this criterion to numerical semigroups generated by the orbit of
their multiplicity under an affine map. Let $a,n,c$ be integers such that $a\ge 2, n\ge 3, 1 \le c\le a.$ For each $1\le i\le n$, put $ A_i=1+a+\cdots+a^{i-1}$ and $m = cA_{n-1} + 1$. Let $b$ be an integers such that $b\ge -(a-2)m-2$ and $\gcd(m, b) = 1.$ Let  $d=(a-1)m+b$ and $s_i=m+dA_i$ for all $i$. Consider the numerical semigroup $H=\left<s_0,s_1,\ldots,s_{n-1}\right>$, where $s_0=m$. In \cite{AMO}, 
\'Alvarez, Moreno-\'Avila and Ojeda determined the
pseudo-Frobenius numbers
\[
\PF(H)=
\{\rho,\rho-b,\ldots,\rho-(n-2)b\},
\]
where $\rho=d(m-1)+(c-1)m.$ They also considered the monomial matrix
\[
X_c=
\begin{pmatrix}
X_1^a&X_2^a&\cdots&X_{n-1}^a&X_n^c\\
X_2&X_3&\cdots&X_n&X_1^{d+c+1-a}
\end{pmatrix}
\]
and asked whether its \(2\times2\) minors generate the whole defining ideal;
see \cite[Remark~8.2]{AMO}.

We answer this question affirmatively. Our proof of the determinantal
presentation is independent of their proof of the containment
\(\rmI_2(X_c)\subseteq I_H\). Using the weighted-degree criterion of
\cite[Section~2.3]{KMO}, that containment follows directly from the affine
relations among the generators. We then prove that, after writing the above
pseudo-Frobenius numbers in increasing order as
\[
\PF(H)=
\{h+\alpha,h+2\alpha,\ldots,h+(n-1)\alpha\},
\qquad \alpha=|b|,
\]
the element \(h\) has a unique factorization. Theorem
\ref{thm:intro-general} therefore yields the reverse containment.

We obtain the following result.
\begin{theorem}[Theorems \ref{thm:affine-unique} and \ref{thm:Xc}]\label{thm:intro-affine}
Let \(a\ge2\), \(n\ge3\), \(1\le c\le a\), $A_0=0 $ and $A_i=\dfrac{a^i-1}{a-1}$ for all $1\le i\le n$. Let $m=cA_{n-1}+1$ and \(b\in\ZZ\) satisfy $b\ge -(a-2)m-2$ and $\gcd(m, b) = 1.$ Put $d=(a-1)m+b, s_i=m+dA_i$, and \(H=\langle s_0,\ldots,s_{n-1}\rangle\). Then the following statements hold true.
\begin{enumerate}
\item[1)] $\PF(H)=\{h+\alpha,h+2\alpha,\ldots,h+(n-1)\alpha\}$ for some $h\ge 0, \alpha>0$.
\item[2)] $h$ has a unique factorization and
\[
I_H=\rmI_2\begin{pmatrix}
	X_1^a&X_2^a&\cdots&X_{n-1}^a&X_n^c\\
	X_2&x_3&\cdots&X_n&x_1^{d+c+1-a}
\end{pmatrix}.
\]
\end{enumerate} Consequently, the Eagon--Northcott complex associated to the matrix in above display gives the graded minimal free resolution of \(k[H]\).
\end{theorem}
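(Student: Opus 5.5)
Part 1) comes from the pseudo-Frobenius computation of \cite{AMO}: $\PF(H)=\{\rho,\rho-b,\ldots,\rho-(n-2)b\}$ with $\rho=d(m-1)+(c-1)m$. We sort this set in increasing order. If $b>0$, set $\alpha=b$ and $h=\rho-(n-1)b$. If $b<0$, set $\alpha=-b$ and $h=\rho-\alpha$. (The case $b=0$ is excluded by $\gcd(m,b)=1$ together with $m>1$.) In either case we must check that $h\ge 0$. This follows from the lower bound $b\ge -(a-2)m-2$ and from $d=(a-1)m+b>0$, which gives a crude estimate of $\rho$ against $(n-1)|b|$. We also need $h\in H$; the unique-factorization step below will exhibit $h$ explicitly as an element of $H$.

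For part 2) the plan is to avoid working with $\Zfac(h)$ directly and instead use the equivalence stated in the introduction: $h$ has a unique factorization if and only if $h'=h$, if and only if $h'+\alpha\notin H$, where $h'=\sum M_i a_i$ and $M_i=\min\{\ell>0\mid \ell a_i+\alpha\in H\}-1$. The steps are as follows.

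First, we compute the integers $m_i$ from the affine relations $s_{i+1}=as_i-(a-1)m+d\cdot(\text{const})$. More precisely, $s_{i+1}-a s_i = m+d-am = d-(a-1)m=b$, so $a s_i = s_{i+1}-b$. When $b<0$, i.e.\ $\alpha=-b$, this gives $a s_i+\alpha=s_{i+1}$, and we expect $m_i=a$ for $1\le i\le n-2$, which matches the top row $X_i^a$. The relation $cs_{n-1}+\alpha=(d+c+1-a)s_0$ should similarly give the $X_n^c$ entry. The case $b>0$ is handled with the orientation of the cycle reversed.

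Second, we verify minimality: $\ell s_i+\alpha\notin H$ for $0<\ell<m_i$. Since $H$ is generated by an "affine orbit", membership can be tested by reducing modulo $m$ (or modulo $d$). Every element $\sum q_j s_j$ is congruent to $d\sum q_jA_j$ modulo $m$. Because $m=cA_{n-1}+1$, the representatives $\sum q_jA_j$ with $q_j\le a-1$ (and $q_{n-1}\le c-1$) behave like base-$a$ digits, so the Apéry set $\Ap(H,m)$ is described by these digit vectors.

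Third, with the $M_i$ known, we check that $h'=\sum M_is_i$ equals the $h$ from part 1). Equivalently, we show that $h'+\alpha\notin H$ by identifying $h'+\alpha$ as the minimal pseudo-Frobenius number.

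Once unique factorization is established, Theorem~\ref{thm:intro-general} yields $I_H=\rmI_2$ of a cyclic matrix with top exponents $q_i+1$ and bottom exponents $r_i$. Matching these against the relations computed above identifies the matrix with $X_c$, possibly after reindexing and transposing the cycle direction. The Eagon--Northcott statement then follows from Theorem~\ref{thm:intro-general}.

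The main obstacle is the second and third steps: proving non-membership $\ell s_i+\alpha\notin H$ and the identity $h'=h$ uniformly in $b$ of either sign. I would first establish an Apéry-set description $\Ap(H,m)=\{\sum q_js_j: (q_j)\ \text{admissible digit vectors}\}$ and then reduce each non-membership claim to a comparison of base-$a$ digit expansions.
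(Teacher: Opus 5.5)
The skeleton of your plan is right: read off $h,\alpha$ from the $\PF$ list of \cite{AMO}, prove that $h$ has a unique factorization, apply Theorem~\ref{thm:general}, and match the cycle with $\X_c$. There is a genuine gap, though. The decisive claim, uniqueness of the factorization of $h$, is never established. You defer it to Steps~2--3, and the route sketched there starts from wrong data and an incorrect lemma.

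\emph{The orientation of the relations is reversed.} From $s_{i+1}-as_i=b$ one gets $as_i+\alpha=s_{i+1}$ only when $b>0$. In that case $m_i=a$ for $0\le i\le n-2$, and $m_{n-1}=c$ via $cs_{n-1}+b=(d+c+1-a)s_0$. When $b<0$ the relations read $s_{i+1}+\alpha=as_i$ and $(d+c+1-a)s_0+\alpha=cs_{n-1}$. So $m_1=\cdots=m_{n-1}=1$ and $m_0=d+c+1-a$, and the expected factorization is $h=(d+c-a)s_0$, with $M_0=d+c-a$ nowhere near digit-sized.

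\emph{The Ap\'ery-set description you plan to prove is false.} Vectors with $q_j\le a-1$ for $1\le j\le n-2$ and $q_{n-1}\le c-1$ give at most $ca^{n-2}<cA_{n-1}+1=m$ residues. Concretely, for $H=\langle 8,19,41,85\rangle$ ($a=2$, $n=4$, $c=1$, $b=3$) one finds $\Ap(H,8)=\{0,19,38,41,60,79,82,85\}$. This contains $38=2s_1$, $82=2s_2$ and $85=s_3$, all outside your digit range. So the non-membership statements $\ell s_i+\alpha\notin H$ do not reduce to the digit comparison you describe.

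\emph{The missing idea is that you need no non-membership at all, only the upper bounds the relations give for free.} Put $u=(a-1,\ldots,a-1,c-1)$ if $b>0$ and $u=(d+c-a,0,\ldots,0)$ if $b<0$. Note that $d+c-a\ge1$, because $b\ge-(a-2)m-2$ and $n\ge3$.
\begin{itemize}
\item The affine relations say $(u_i+1)s_i+\alpha\in H$ for every $i$, hence $M_i\le u_i$.
\item A direct computation gives $\sum_i u_is_i=h$. For $b>0$ use $(a-1)\sum_{i=1}^{n-2}A_i=A_{n-1}-(n-1)$ and $m-1=cA_{n-1}$; for $b<0$ use $b=d-(a-1)m$. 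This also settles $h\ge0$ in part~1 without any estimate.
\item Thus $h'\le h$, while Corollary~\ref{cor:hprime} gives $h'-h\in H$. Hence $h'=h$, and Theorem~\ref{thm:equiv} yields uniqueness.
\end{itemize}
Equivalently, by Proposition~\ref{prop:Mi} every $q\in\Zfac(h)$ satisfies $q_i\le M_i\le u_i$ and $\sum_i q_is_i=\sum_i u_is_i$, which forces $q=u$.

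With this repair your route is correct and shorter than the paper's. The paper proves uniqueness by hand: it writes a factorization as $h=Lm+dW$ and uses $\gcd(m,d)=1$ to pin down $L$ and $W$. For $b>0$ it also needs the minimal-length results of \cite{AMO} (Proposition~4.3, Lemma~4.4) plus a congruence modulo $a$. The repaired argument uses only the $\PF$ list, the boundary-lemma machinery of Section~3, and the two affine relations. Identifying the resulting cycle with $\X_c$ then goes as you indicate, reversing the order of the generators and swapping the rows when $b<0$.
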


For this explicit family, the equality of ideals can also be proved by reducing modulo $X_1$ and comparing lengths; see Remark \ref{rem:direct}. The application of Theorem \ref{thm:intro-general} has a different purpose. The arithmetic pseudo-Frobenius progression and the unique factorization of $h$ force
the relations among the minimal generators to form a single cycle. That cycle determines the exponents of a determinantal matrix, and the theorem then identifies its minors with the
defining ideal. Thus the presentation of this family is an instance of a criterion formulated without reference to the orbit parameters $a, b, c, m$ or to an a priori candidate matrix. The length computation proves the equality for $X_c$ efficiently, whereas the criterion explains why
the pseudo-Frobenius calculation of \cite{AMO} leads to a determinantal presentation and provides a route for other families with the same arithmetic and factorization properties.

The paper is organized as follows. Section~2 recalls the basic facts on
numerical semigroups, pseudo-Frobenius numbers, and the weighted-degree
criterion used for binomial relations. Section~3 develops the
unique-factorization criterion and proves Theorem~\ref{thm:intro-general}.
Section~4 applies it to the affine-closed family and proves
Theorem~\ref{thm:intro-affine}.

\section{Preliminaries}

Throughout this paper, let $H=\langle a_1,\ldots,a_n\rangle$
be a numerical semigroup minimally generated by \(a_1,\ldots,a_n\). Let \(k\) be a field and $R=k[H]=k[t^{a_1},\ldots,t^{a_n}]$
associated to \(H\). Let $S=k[X_1,\ldots,X_n]$ be the weighted polynomial ring with $	\deg X_i=a_i$ for every $1\le i\le n$ and consider the graded \(k\)-algebra homomorphism $\varphi:S\longrightarrow R$ given by
$\varphi(X_i)=t^{a_i}$ for all $i$. Recall that the defining ideal $I_H$ of $R$ is defined to be the kernel of $\varphi$.
\begin{definition}
For \(0<h\in H\), the Ap\'ery set of \(H\) with respect to \(h\) is defined by
\[
\Ap(H,h)=\{a\in H:a-h\notin H\}.
\]
Equivalently, if
\[
w_i=\min\{a\in H\mid a\equiv i\pmod h\} \text{ for } 0\le \forall i\le h-1,
\]
then
\[
\Ap(H,h)=\{w_0=0,w_1,\ldots,w_{h-1}\}.
\]
\end{definition}

\begin{definition}
An integer \(f\in\ZZ\setminus H\) is called a pseudo-Frobenius number of \(H\) if $f+s\in H$ for every $0<s\in H$. The set of all pseudo-Frobenius numbers is denoted by \(\PF(H)\).
\end{definition}

We recall the following two facts.
\begin{lemma}[{\cite[Proposition~2.19(2)]{RGS}}]\label{lem:PF-separation}
For \(z\in\ZZ\), one has \(z\notin H\) if and only if there exists
\(f\in\PF(H)\) such that \(f-z\in H\).
\end{lemma}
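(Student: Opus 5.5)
We prove the two implications separately, using only the definition of $\PF(H)$ and the fact that every integer larger than the Frobenius number $F(H)=\max(\ZZ\setminus H)$ lies in $H$. For the ``if'' direction, suppose $f\in\PF(H)$ with $f-z\in H$, and assume for contradiction that $z\in H$. There are two cases.
\begin{itemize}
\item If $f-z>0$, then $f=z+(f-z)$ with $z\in H$ and $0<f-z\in H$, so $f\in H$.
\item If $f-z=0$, then $f=z\in H$.
\end{itemize}
Either way this contradicts $f\notin H$. Hence $z\notin H$. (The case $z=0$ is covered by the second case, since $f=0$ would have to lie in $H$.)

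For the ``only if'' direction, take $z\notin H$ and consider the set
\[
T=\{\,z+s\mid s\in H,\ z+s\notin H\,\}.
\]
It is nonempty because $z\in T$ (take $s=0$). It is bounded above by $F(H)$, since every element of $T$ lies outside $H$. So $T$ has a maximum, which we call $f=z+s_0$.

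We claim $f\in\PF(H)$. First, $f\notin H$ by the definition of $T$. Next, let $0<s\in H$. Then $f+s=z+(s_0+s)$ with $s_0+s\in H$, and $f+s>f$. By the maximality of $f$, the element $f+s$ cannot lie in $T$, so it must lie in $H$. Hence $f$ is a pseudo-Frobenius number, and $f-z=s_0\in H$, as required.

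There is no real obstacle here. The only point needing care is that $T$ is bounded above, and this holds because $H$ has finite complement in $\NN$, which in turn follows from $\gcd(a_1,\ldots,a_n)=1$.
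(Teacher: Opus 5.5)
Your proof is correct and is essentially the standard argument. The paper gives no proof of its own and cites Rosales and Garc\'{\i}a-S\'anchez, where $\PF(H)$ is identified with the maximal elements of $\ZZ\setminus H$ under the order $x\le_H y \iff y-x\in H$, and such a maximal element above $z\notin H$ exists because $\{y\notin H : y-z\in H\}$ is nonempty and bounded above by the Frobenius number, exactly as in your construction of $T$. The case split in your ``if'' direction is unnecessary, since $z\in H$ and $f-z\in H$ already give $f\in H$ by additive closure.
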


\begin{lemma}[{\cite[Proposition~2.20]{RGS}}]\label{lem:PF-Apery}
For each \(0<h\in H\),
\[
\PF(H)=
\{a-h\mid a\in\Ap(H,h),\ b-a\notin H
\text{ for all }b\in\Ap(H,h)\setminus\{a\}\}.
\]
\end{lemma}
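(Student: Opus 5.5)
The plan is to prove the two inclusions directly from the definitions of $\Ap(H,h)$ and $\PF(H)$. The only tool is the observation that the condition ``$b-a\notin H$ for all $b\in\Ap(H,h)\setminus\{a\}$'' says exactly that $a$ is maximal in $\Ap(H,h)$ with respect to the order $x\le_H y\iff y-x\in H$. No earlier result of the paper is needed.

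For the inclusion ``$\subseteq$'', take $f\in\PF(H)$ and put $a=f+h$.
\begin{itemize}
\item Since $0<h\in H$ and $f$ is pseudo-Frobenius, we get $a\in H$.
\item Since $a-h=f\notin H$, we get $a\in\Ap(H,h)$, and $f=a-h$.
\item Suppose some $b\in\Ap(H,h)$ with $b\neq a$ had $s:=b-a\in H$. Then $s>0$, so $b-h=f+s\in H$ by the pseudo-Frobenius property. This contradicts $b\in\Ap(H,h)$.
\end{itemize}
Hence $a$ satisfies the maximality condition, and $f$ lies in the right-hand side.

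For the inclusion ``$\supseteq$'', take $a\in\Ap(H,h)$ that is maximal in the above sense and put $f=a-h$.
\begin{itemize}
\item By the definition of the Ap\'ery set, $f\notin H$.
\item Let $0<s\in H$ and suppose, for a contradiction, that $f+s\notin H$.
\item Set $b=a+s$. Then $b\in H$, because $a,s\in H$.
\item Also $b-h=f+s\notin H$, so $b\in\Ap(H,h)$.
\item Moreover $b\neq a$ because $s>0$, and $b-a=s\in H$. This contradicts the maximality of $a$.
\end{itemize}
Therefore $f+s\in H$ for every $0<s\in H$, that is, $f\in\PF(H)$.

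There is no serious obstacle here. The only point requiring care is the bookkeeping of the case $b=a$: positivity of $s$ is exactly what guarantees $b\neq a$ in both directions. This is why the statement excludes $a$ itself from the quantifier over $b$.
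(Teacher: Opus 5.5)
Your proof is correct and complete. The paper gives no proof of its own for this lemma; it quotes \cite[Proposition~2.20]{RGS}, and the standard argument there is exactly your two-inclusion proof, identifying $\PF(H)$ via $f\mapsto f+h$ with the $\le_H$-maximal elements of $\Ap(H,h)$.
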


The following elementary criterion will be used repeatedly. It is the
weighted-degree argument of \cite[Lemma~2.11 and Corollary~2.12]{KMO}.

\begin{lemma}[Weighted-degree criterion]\label{lem:degree}
Let the matrix
\[
M=
\begin{pmatrix}
f_1&f_2&\cdots&f_n\\
g_1&g_2&\cdots&g_n
\end{pmatrix},
\]
where \(f_i,g_i\) are monomials of positive degree in \(S\). Suppose that there
exists an integer \(\delta\) such that $\deg g_i-\deg f_i=\delta$ for all $1\le i\le n$. Then $\rmI_2(M)\subseteq I_H.$
\end{lemma}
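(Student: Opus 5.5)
We prove the weighted-degree criterion (Lemma~\ref{lem:degree}) by reducing to a single $2\times2$ minor. Since $\rmI_2(M)$ is generated by the minors $\Delta_{ij}=f_ig_j-f_jg_i$ for $1\le i<j\le n$, and $I_H$ is an ideal, it suffices to show $\Delta_{ij}\in I_H$ for every pair $i<j$. Since $\varphi$ is a $k$-algebra homomorphism sending monomials to monomials in $t$, for any monomial $u=X_1^{e_1}\cdots X_n^{e_n}$ we have $\varphi(u)=t^{\deg u}$, where $\deg u=\sum e_ia_i$ is the weighted degree.

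Fix $i<j$ and compute
\[
\deg(f_ig_j)=\deg f_i+\deg g_j=\deg f_i+\deg f_j+\delta,
\]
using $\deg g_j=\deg f_j+\delta$. In the same way,
\[
\deg(f_jg_i)=\deg f_j+\deg f_i+\delta.
\]
The two degrees are equal, so
\[
\varphi(\Delta_{ij})=t^{\deg f_i+\deg f_j+\delta}-t^{\deg f_i+\deg f_j+\delta}=0,
\]
and hence $\Delta_{ij}\in\Ker\varphi=I_H$.

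The argument has no real obstacle. The only point to note is that $\varphi$ is homogeneous for the weighted grading, so each binomial whose two monomials have equal weighted degree lies in the kernel. The hypothesis that all entries have positive degree is not needed for the containment. It only guarantees that the minors are genuine binomials in the maximal ideal, which matters later for minimality of the Eagon--Northcott resolution.
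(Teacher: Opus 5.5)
Your proof is correct and is essentially the paper's own argument: both reduce to the minors $\Delta_{ij}=f_ig_j-f_jg_i$, note that its two monomials have the same weighted degree $\deg f_i+\deg f_j+\delta$, and conclude $\varphi(\Delta_{ij})=0$ because every monomial $u$ maps to $t^{\deg u}$. Your side remark is also accurate: the positivity of the degrees is not needed for the containment itself.
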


\begin{proof}
For \(1\le i<j\le n\), set $\Delta_{ij}=f_i g_j-f_jg_i$.
By assumption, we have
\[
\deg f_i+\deg g_j
=\deg f_i+\deg f_j+\delta
=\deg f_j+\deg g_i.
\]
Every monomial \(u\in S\) satisfies $\varphi_H(u)=t^{\deg u}$.
Hence \(\varphi_H(\Delta_{ij})=0\), so every \(2\times2\) minor of \(M\) belongs to \(I_H\).
\end{proof}

\begin{remark}
The integer \(\delta\) in Lemma~\ref{lem:degree} is not required to be positive. This will be useful in the last section for the affine family when \(b<0\).
\end{remark}

\section{A unique-factorization criterion for determinantal defining ideals}

Throughout this section, let $H=\langle a_1,\ldots,a_n\rangle, n\ge3$,
and assume that
\begin{equation}\tag{$\star$}
\PF(H)=
\{h+\alpha,h+2\alpha,\ldots,h+(n-1)\alpha\},
\end{equation}
where \(h\ge0\) and \(\alpha>0\).

\begin{setting}\label{setting:mi}
For each \(1\le i\le n\), put $m_i=\min\{\ell>0\mid\ell a_i+\alpha\in H\},
 M_i=m_i-1$, and define
\[
h'=M_1 a_1+M_2 a_2+\cdots+M_n a_n.
\]
We also set
\[
\Zfac(h)=
\left\{(q_1,\ldots,q_n)\in\NN^n\mid
h=\sum_{i=1}^nq_i a_i\right\}.
\]
\end{setting}

Let us begin with a basic observation.

\begin{lemma}[{\cite[Lemma~2]{KM}}]\label{lem:h-alpha}
Under the condition ($\star$), one has $h\in H$ and $\alpha\notin H$.
\end{lemma}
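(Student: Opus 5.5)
We need to prove that under ($\star$) one has $h\in H$ and $\alpha\notin H$. The plan is to use only the arithmetic shape of $\PF(H)$ together with Lemma~\ref{lem:PF-separation}, applied to suitably chosen integers $z$. Both claims should follow from the fact that the pseudo-Frobenius numbers are exactly the maximal elements of $\ZZ\setminus H$ with respect to the order $x\le_H y \iff y-x\in H$. Concretely, Lemma~\ref{lem:PF-separation} says that $z\notin H$ if and only if $z\le_H f$ for some $f\in\PF(H)$.

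\emph{First, $\alpha\notin H$.} Suppose instead that $\alpha\in H$. Since $\alpha>0$, we have $0<\alpha\in H$. By the definition of a pseudo-Frobenius number, $f+\alpha\in H$ for every $f\in\PF(H)$. Apply this to $f=h+\alpha\in\PF(H)$, which exists because $n\ge3$. Then $h+2\alpha\in H$. But $h+2\alpha\in\PF(H)\subseteq\ZZ\setminus H$, a contradiction. Hence $\alpha\notin H$. This step is immediate.

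\emph{Second, $h\in H$.} Suppose $h\notin H$. By Lemma~\ref{lem:PF-separation} there is some $f=h+j\alpha\in\PF(H)$, with $1\le j\le n-1$, such that $f-h=j\alpha\in H$. For $j=1$ this contradicts $\alpha\notin H$ immediately. For $j\ge2$ we need more.

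The natural move is to apply Lemma~\ref{lem:PF-separation} to $z=\alpha$, which is not in $H$ by the first step. This produces some $k$ with $h+k\alpha-\alpha=h+(k-1)\alpha\in H$. If $k=1$, then $h\in H$ and we are done. If $k\ge2$, then $h+(k-1)\alpha$ would be both in $H$ and in $\PF(H)$, which is impossible because $\PF(H)\cap H=\emptyset$. So in every case $h\in H$.

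In fact this argument proves $h\in H$ directly, with no contradiction hypothesis needed and without the $j\ge 2$ analysis above. I expect no real obstacle: the only subtlety is to prove $\alpha\notin H$ first, so that Lemma~\ref{lem:PF-separation} can then be applied to $z=\alpha$.
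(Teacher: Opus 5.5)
Your argument is correct and is essentially the paper's proof: you rule out $\alpha\in H$ via $(h+\alpha)+\alpha=h+2\alpha\in H$, then apply Lemma~\ref{lem:PF-separation} to $z=\alpha$ and use $\PF(H)\cap H=\emptyset$ to force the index to be $1$, which gives $h\in H$. The preliminary contradiction paragraph with the ``$j\ge 2$'' case analysis is superfluous and can be dropped, as you note yourself; also, it is the existence of $h+2\alpha\in\PF(H)$, not of $h+\alpha$, that uses $n\ge 3$.
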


\begin{proof}
If \(\alpha\in H\), then $(h+\alpha)+\alpha=h+2\alpha\in H$,
because \(h+\alpha\in\PF(H)\), a contradiction. Hence \(\alpha\notin H\). By Lemma~\ref{lem:PF-separation}, there exists \(1\le i\le n-1\) such that
\[
h+i\alpha-\alpha=h+(i-1)\alpha\in H.
\]
If \(i\ge2\), then \(h+(i-1)\alpha\in\PF(H)\) which is a contradiction. Therefore
\(i=1\), and \(h\in H\).
\end{proof}

\begin{lemma}[Boundary lemma]\label{lem:boundary}
For every \(c\in H\), then $c+\alpha\notin H$ if and only if $h-c\in H.$
\end{lemma}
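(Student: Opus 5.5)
We prove the two implications separately, using Lemma~\ref{lem:PF-separation} for one direction and the structure of $\PF(H)$ together with Lemma~\ref{lem:h-alpha} for the other.

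\emph{($\Leftarrow$)} Suppose $c\in H$ and $h-c\in H$, and assume for contradiction that $c+\alpha\in H$. Then
\[
h+\alpha=(h-c)+(c+\alpha)\in H,
\]
which is impossible because $h+\alpha\in\PF(H)$ and pseudo-Frobenius numbers lie outside $H$. This direction is immediate.

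\emph{($\Rightarrow$)} Suppose $c\in H$ and $c+\alpha\notin H$. By Lemma~\ref{lem:PF-separation} there is some $1\le i\le n-1$ with $h+i\alpha-(c+\alpha)\in H$, that is,
\[
h+(i-1)\alpha-c\in H.
\]
We want to show that $i=1$, since then $h-c\in H$ as required. Suppose instead $i\ge 2$ and put $x=h+(i-1)\alpha-c\in H$. Then $h+(i-1)\alpha=x+c$. If $c>0$, then $x+c\in H$ whenever $x\ge 0$ lies in $H$, because $c\in H$. But $h+(i-1)\alpha$ with $2\le i\le n-1$ is an element of $\PF(H)$, hence not in $H$. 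This is a contradiction, so $i=1$.

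If $c=0$, the hypothesis says $\alpha\notin H$, which holds by Lemma~\ref{lem:h-alpha}. The conclusion $h-0=h\in H$ is also Lemma~\ref{lem:h-alpha}, so this case is consistent.

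The only point needing care is excluding $i\ge2$. It rests on the fact that $h+(i-1)\alpha$ is itself a pseudo-Frobenius number when $2\le i\le n-1$, hence not in $H$. This is exactly the argument used in the proof of Lemma~\ref{lem:h-alpha}, now shifted by $c$. No genuine obstacle is expected beyond checking the index range $1\le i-1\le n-2$.
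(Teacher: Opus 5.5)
Your proof is correct and is essentially the paper's argument: the reverse direction uses $h+\alpha=(h-c)+(c+\alpha)$, and the forward direction uses Lemma~\ref{lem:PF-separation}, ruling out $i\ge2$ by adding $c$ back to get $h+(i-1)\alpha\in H$. The separate treatment of $c=0$, and hence the appeal to Lemma~\ref{lem:h-alpha}, is unnecessary, since adding $c=0$ works just as well.
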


\begin{proof}
Suppose first that \(c+\alpha\notin H\). By
Lemma~\ref{lem:PF-separation}, there exists \(1\le i\le n-1\) such that
\[
h+i\alpha-(c+\alpha)
=h+(i-1)\alpha-c\in H.
\]
If \(i\ge2\), then adding \(c\in H\) gives
\(h+(i-1)\alpha\in H\), contradicting ($\star$). Thus \(i=1\),
and \(h-c\in H\).

Conversely, suppose that \(h-c\in H\) and \(c+\alpha\in H\). Then $h+\alpha=(h-c)+(c+\alpha)\in H$, contrary to \(h+\alpha\in\PF(H)\).
\end{proof}

\begin{proposition}\label{prop:Mi}
For every \(i\), we have
\[
M_i=
\max\{q_i\mid(q_1,\ldots,q_n)\in\Zfac(h)\}.
\]
\end{proposition}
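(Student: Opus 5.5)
The plan is to apply the Boundary lemma (Lemma~\ref{lem:boundary}) to the pure powers $c=\ell a_i$, which lie in $H$. For $\ell\ge 0$ we have $\ell a_i\in H$, so the lemma gives
\[
\ell a_i+\alpha\notin H \iff h-\ell a_i\in H .
\]
Here the case $\ell=0$ is consistent with Lemma~\ref{lem:h-alpha}, since $\alpha\notin H$ and $h\in H$. Next, observe that $h-\ell a_i\in H$ holds exactly when $h$ has a factorization $(q_1,\ldots,q_n)\in\Zfac(h)$ with $q_i\ge \ell$. Indeed, if $h-\ell a_i=\sum_j p_ja_j$, then adding $\ell$ to the $i$-th coordinate gives a factorization of $h$. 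Conversely, if $q_i\ge\ell$, then $h-\ell a_i=\sum_{j\ne i}q_ja_j+(q_i-\ell)a_i\in H$.

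Put $N_i=\max\{q_i\mid (q_1,\ldots,q_n)\in\Zfac(h)\}$. This maximum is finite, since $q_ia_i\le h$, and the set $\Zfac(h)$ is nonempty because $h\in H$ by Lemma~\ref{lem:h-alpha}. By the equivalences above, for every $\ell\ge0$ we get $\ell a_i+\alpha\notin H$ if and only if $\ell\le N_i$. The reason is that the set of $\ell\ge 0$ with $h-\ell a_i\in H$ is downward closed, and by the second equivalence it is precisely $\{0,1,\ldots,N_i\}$.

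It follows that $\ell a_i+\alpha\notin H$ for $1\le \ell\le N_i$ and $(N_i+1)a_i+\alpha\in H$. Hence $m_i=\min\{\ell>0\mid \ell a_i+\alpha\in H\}=N_i+1$, and therefore $M_i=N_i$, which is the claim. This also shows that $m_i$ is well defined. The only point needing care is the downward-closedness used in the previous paragraph, which is immediate from the factorization description. I expect no real obstacle: the whole content is in Lemma~\ref{lem:boundary}.
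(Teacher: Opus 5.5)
Your proof is correct and is essentially the paper's argument: both apply the Boundary lemma to $c=\ell a_i$ and use that $h-\ell a_i\in H$ holds exactly when some factorization of $h$ has $i$-th coordinate at least $\ell$. The paper checks only $\ell=M_i$ and $\ell=M_i+1$. You treat every $\ell\ge 0$ at once, which also shows explicitly that $m_i$ is well defined and covers the case $M_i=0$ via $\alpha\notin H$.
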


\begin{proof}
By definition, one has $M_i a_i+\alpha\notin H$ and $(M_i+1)a_i+\alpha\in H$. Applying Lemma~\ref{lem:boundary}, we obtain
$h-M_i a_i\in H$ and $h-(M_i+1)a_i\notin H$. Hence \(M_i\) is exactly the maximum possible \(i\)-th coordinate among the
factorizations of \(h\).
\end{proof}

\begin{corollary}\label{cor:hprime}
One has $h'-h\in H$.
\end{corollary}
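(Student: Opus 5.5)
We have to show $h'-h\in H$, where $h'=\sum_i M_ia_i$. The idea is to use the Boundary lemma (Lemma~\ref{lem:boundary}) in its contrapositive form, applied to the element $c=h'$.

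Since $h\in H$ by Lemma~\ref{lem:h-alpha}, the lemma applies to $c=h'$. If we know $h'+\alpha\notin H$, it gives $h-h'\in H$. That is the opposite direction from what we want. So the plan is to bound $h'$ against $h$ from both sides, via factorizations.

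First, Proposition~\ref{prop:Mi} supplies, for each $i$, a factorization $(q^{(i)}_1,\ldots,q^{(i)}_n)\in\Zfac(h)$ with $q^{(i)}_i=M_i$. Moreover every factorization $(q_1,\ldots,q_n)$ of $h$ satisfies $q_j\le M_j$ for all $j$. Take any one factorization $(q_1,\ldots,q_n)\in\Zfac(h)$. Then
\[
h'-h=\sum_{j=1}^n (M_j-q_j)a_j,\qquad M_j-q_j\ge 0 \text{ for all } j .
\]
This is a nonnegative integer combination of the generators, so $h'-h\in H$. This uses only that each coordinate of a factorization is bounded by the maximum, which is exactly the content of Proposition~\ref{prop:Mi}.

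The only subtle point is to confirm that $\Zfac(h)\neq\emptyset$, so that some factorization exists. This follows from $h\in H$ (Lemma~\ref{lem:h-alpha}). We also need the maxima $M_i$ to be finite, which is built into their definition as $m_i-1$. We must check that $m_i$ is well defined, i.e.\ that $\ell a_i+\alpha\in H$ for some $\ell>0$; this holds for large $\ell$ because $H$ has finite complement. With these two points in place, the argument is immediate and involves no real obstacle.
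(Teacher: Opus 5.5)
Your argument is correct and is essentially the paper's proof: take any factorization of $h$, use Proposition~\ref{prop:Mi} to get $q_j\le M_j$ for every $j$, and conclude $h'-h=\sum_j (M_j-q_j)a_j\in H$. The opening detour through the Boundary lemma plays no role and can be dropped.
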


\begin{proof}
Take any factorization $h=\mathop\sum\limits_{i=1}^nq_i a_i$ with $0\le q_i\in \ZZ$ for all $i$. We then have by Proposition~\ref{prop:Mi} that \(q_i\le M_i\) for all \(i\). Therefore
\[
h'-h=\sum_{i=1}^n(M_i-q_i)a_i\in H.
\]
\end{proof}

\begin{theorem}\label{thm:equiv}
Under ($\star$), the following statements are equivalent.
\begin{enumerate}
\item[1)] \(h\) has a unique factorization in \(H\).
\item[2)] \(h'=h\).
\item[3)] \(h'+\alpha\notin H\).
\end{enumerate}
\end{theorem}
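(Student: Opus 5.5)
We prove the cycle of implications 1) $\Rightarrow$ 2) $\Rightarrow$ 3) $\Rightarrow$ 1). The tools are Proposition~\ref{prop:Mi}, which identifies $M_i$ with the largest $i$-th coordinate among factorizations of $h$; Corollary~\ref{cor:hprime}, which gives $h'-h\in H$; and the Boundary Lemma~\ref{lem:boundary}.

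For 1) $\Rightarrow$ 2): suppose $h=\sum q_ia_i$ is the only factorization. Then $\Zfac(h)=\{(q_1,\ldots,q_n)\}$, so Proposition~\ref{prop:Mi} gives $M_i=q_i$ for every $i$. Hence $h'=\sum M_ia_i=\sum q_ia_i=h$.

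For 2) $\Rightarrow$ 3): if $h'=h$, then $h-h'=0\in H$. Since $h'\in H$, Lemma~\ref{lem:boundary} applied with $c=h'$ gives $h'+\alpha\notin H$. Alternatively, one can argue directly: if $h+\alpha\in H$, this contradicts $h+\alpha\in\PF(H)$, because pseudo-Frobenius numbers lie outside $H$.

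For 3) $\Rightarrow$ 1): assume $h'+\alpha\notin H$. Because $h'\in H$, Lemma~\ref{lem:boundary} gives $h-h'\in H$. Corollary~\ref{cor:hprime} gives $h'-h\in H$. A numerical semigroup contains no nonzero pair $x,-x$, so $h'=h$.

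Now take any factorization $(q_1,\ldots,q_n)\in\Zfac(h)$. By Proposition~\ref{prop:Mi}, $q_i\le M_i$ for all $i$, and
\[
\sum_{i=1}^n (M_i-q_i)a_i=h'-h=0.
\]
Every summand is nonnegative and every $a_i>0$, so $q_i=M_i$ for all $i$. Thus $(M_1,\ldots,M_n)$ is the unique factorization of $h$. Note that $h\in H$ by Lemma~\ref{lem:h-alpha}, so $\Zfac(h)$ is nonempty and the factorization does exist.

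None of the steps is a real obstacle. The only point needing care is 3) $\Rightarrow$ 1): it combines the two containments $h-h'\in H$ and $h'-h\in H$ to get $h=h'$, and it uses positivity of the $a_i$ to force every factorization to coincide with $(M_1,\ldots,M_n)$.
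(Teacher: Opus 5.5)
Your proposal is correct and uses the same ingredients as the paper's proof: Proposition~\ref{prop:Mi} for 1)$\Leftrightarrow$2), and the Boundary Lemma with $c=h'$ together with Corollary~\ref{cor:hprime} to get $h=h'$ from 3). The only difference is that you arrange the argument as a cycle of implications instead of proving 1)$\Leftrightarrow$2) and 2)$\Leftrightarrow$3) separately.
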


\begin{proof}
Suppose first that $h=q_1a_1+\cdots+q_na_n$
is the unique factorization of \(h\). By Proposition~\ref{prop:Mi}, for each
\(i\) there exists a factorization of \(h\) whose \(i\)-th coordinate is
\(M_i\). By uniqueness, this factorization must be the one above. Hence
\(q_i=M_i\) for all \(i\), so \(h=h'\).

Conversely, suppose \(h=h'\), and let $h=q_1a_1+\cdots+q_na_n$
be any factorization. Then we have by Proposition~\ref{prop:Mi} that \(q_i\le M_i\), and therefore
\[
0=h'-h=\sum_{i=1}^n(M_i-q_i)a_i.
\]
Since all \(a_i>0\), it follows that \(q_i=M_i\) for every \(i\). Thus the
factorization is unique.

Finally, Lemma~\ref{lem:boundary}, applied to \(c=h'\), gives the equivalence
\[
h'+\alpha\notin H
\quad\Longleftrightarrow\quad
h-h'\in H.
\]
Moreover, we already know by Corollary~\ref{cor:hprime} that \(h'-h\in H\). Since
\(H\subseteq\NN\), both \(h-h'\) and \(h'-h\) can belong to \(H\) only when
\(h=h'\).
\end{proof}

We now prove the main result of this section.

\begin{theorem}\label{thm:general}
Assume ($\star$). Suppose $h=q_1a_1+\cdots+q_na_n$ is the unique factorization of \(h\). Then, after a suitable permutation of
\(a_1,\ldots,a_n\), there exist positive integers
\(r_1,\ldots,r_n\) such that
\[
(q_i+1)a_i+\alpha=r_{i+1}a_{i+1}, 
1\le\forall i\le n,
\]
where indices are taken modulo \(n\). Consequently,
\[
I_H=
\rmI_2\begin{pmatrix}
X_1^{q_1+1}&X_2^{q_2+1}&\cdots&X_n^{q_n+1}\\
X_2^{r_2}&X_3^{r_3}&\cdots&X_1^{r_1}
\end{pmatrix}.
\]
In particular, the graded minimal $S$-free resolution of $k[H]$ is given by the Eagon--Northcott complex \cite{EN} of the the matrix mentioned above.
\end{theorem}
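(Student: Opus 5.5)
The plan has two parts: first the arithmetic cycle, then the ideal-theoretic equality. By Theorem~\ref{thm:equiv} and Proposition~\ref{prop:Mi}, uniqueness of the factorization of $h$ gives $q_i=M_i$ for every $i$. Hence
\[
q_ia_i+\alpha\notin H, \qquad w_i:=(q_i+1)a_i+\alpha\in H \qquad\text{for every } i.
\]

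\emph{Step 1: $a_i$ does not occur in $w_i$.} No factorization of $w_i$ uses $a_i$, since $w_i-a_i=q_ia_i+\alpha\notin H$.

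\emph{Step 2: $w_i$ is a pure power of a single generator.} Suppose $w_i-a_j\in H$ with $j\neq i$. Adding $h-q_ia_i\in H$ gives
\[
h+\alpha+a_i-a_j\in H.
\]
I would combine this with the boundary lemma (Lemma~\ref{lem:boundary}), applied to elements such as $c=(q_i+1)a_i-a_j+\dots$ built from partial factorizations of $w_i$, and with the uniqueness of the factorization of $h$. The aim is to rule out $w_i$ having two distinct generators $a_j\neq a_k$ in its support. The idea is that otherwise one produces either a second factorization of $h$, or an element $c\in H$ with $c+\alpha\notin H$ but $h-c\notin H$. If this is established, we get $w_i=r_{\sigma(i)}a_{\sigma(i)}$ for a well-defined map $\sigma$ with $\sigma(i)\ne i$.

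\emph{Step 3: $\sigma$ is a single $n$-cycle.} I would show that $\sigma$ is injective by the same mechanism: two relations landing on the same $a_j$ should again contradict uniqueness or the boundary lemma. A permutation decomposing into several cycles would give a proper subset of generators closed under these relations. I would then use Lemma~\ref{lem:PF-Apery} with respect to $a_1$, together with the requirement that $\PF(H)$ has exactly $n-1$ elements in arithmetic progression, to force a single cycle. After relabelling along the cycle we obtain $(q_i+1)a_i+\alpha=r_{i+1}a_{i+1}$. I expect Steps 2 and 3 to be the main obstacle. The first thing I would try is to describe the Apéry set $\Ap(H,a_1)$ explicitly as the set of degrees of monomials $\prod_{i\ge 2} X_i^{e_i}$ not divisible by the entries in the relevant $2\times2$ minors, and to read off its maximal elements.

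\emph{Step 4: the ideal equality.} With the cycle in hand, the matrix
\[
\begin{pmatrix}
X_1^{q_1+1}&X_2^{q_2+1}&\cdots&X_n^{q_n+1}\\
X_2^{r_2}&X_3^{r_3}&\cdots&X_1^{r_1}
\end{pmatrix}
\]
satisfies $\deg g_i-\deg f_i=\alpha$ for every column. Lemma~\ref{lem:degree} then gives $J:=\rmI_2(\cdot)\subseteq I_H$. The ideal $J$ has height at most $n-1$, and $I_H$ is prime of height $n-1$. So $S/J$ is Cohen–Macaulay with Eagon–Northcott resolution, provided $J$ has height exactly $n-1$; this holds because $J+(X_1)$ is primary to the maximal ideal. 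To conclude $J=I_H$, I would compare lengths modulo $X_1$:
\[
\ell\bigl(S/(J+X_1S)\bigr)=a_1=\ell\bigl(S/(I_H+X_1S)\bigr).
\]
The right-hand equality is the standard fact that $\sharp\Ap(H,a_1)=a_1$. The left-hand length is computed by counting standard monomials in $X_2,\dots,X_n$ modulo the monomial ideal generated by the images of the minors. I would check that these monomials have pairwise distinct residues modulo $a_1$, using the cycle relations and the uniqueness of the factorization of $h$. Since $J\subseteq I_H$ and both quotients are Cohen–Macaulay of the same multiplicity, equality follows. The minimal free resolution is then the Eagon–Northcott complex \cite{EN}, which is minimal because all entries of the matrix lie in the maximal ideal.
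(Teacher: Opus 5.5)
Your proposal has a genuine gap in Steps 2 and 3, and a second one in Step 4.

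\textbf{Steps 2 and 3.} These steps carry the arithmetic content of the theorem, but as written they only state intentions. The relation $h+\alpha+a_i-a_j\in H$ is not contradictory by itself. You do not say which element $c$ would violate Lemma~\ref{lem:boundary}. Describing $\Ap(H,a_1)$ through the minors presupposes the matrix you are trying to construct. The missing idea is to sum the relations around a cycle and invoke $(\star)$ directly. For each $i$ choose \emph{any} $\sigma(i)$ with $w_i-a_{\sigma(i)}\in H$; then $\sigma(i)\neq i$ by your Step 1. Write $w_i=a_{\sigma(i)}+u_i$ with $u_i\in H$. The map $i\mapsto\sigma(i)$ has a directed cycle $C$, of length $r$ say. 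Summing over $C$, the terms $a_{\sigma(i)}$ cancel against $\sum_{i\in C}a_i$, so $\sum_{i\in C}q_ia_i+r\alpha=\sum_{i\in C}u_i\in H$. Adding $\sum_{i\notin C}q_ia_i$ gives $h+r\alpha\in H$, which contradicts $(\star)$ unless $r=n$. This holds for every admissible choice of $\sigma$, so no $w_i$ can have two distinct generators in its support: redirecting one arrow of the $n$-cycle would create a shorter cycle. Hence $w_i=r_{\sigma(i)}a_{\sigma(i)}$, and $\sigma$ is a single $n$-cycle, with no separate injectivity argument. The logical order is the reverse of yours: the pure-power property is a consequence of the cycle count, not an input to it.

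\textbf{Step 4.} Proving $J=I_H$ via $\ell(S/(J+X_1S))=a_1$ is legitimate in principle; it is the argument of Remark~\ref{rem:direct}. However, that equality is exactly what has to be proved, and you only announce a check. It does not follow from a cyclic matrix with constant column-degree difference. Take $H=\langle 3,4,5\rangle$ with $a_1=3$, $a_2=5$, $a_3=4$. The matrix $\begin{pmatrix} X_1 & X_2^2 & X_3\\ X_2 & X_3^3 & X_1^2\end{pmatrix}$ has every column difference equal to $2$. Yet $S/(J+X_1S)\cong k[X_2,X_3]/(X_2^3,X_2X_3,X_3^4)$ has length $6\neq 3$. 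So any count must use $(\star)$ and $q_i=M_i$ in an essential way, and you give no mechanism. Also, for $n\ge 4$ the minors of columns $2\le i<j\le n-1$ do not involve $X_1$ and remain binomials modulo $X_1$. Counting standard monomials therefore needs a Gr\"obner basis, not a monomial ideal.

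The paper avoids this count by comparing canonical modules. Dualizing the Eagon--Northcott complex shows that $\mathrm K_{S/J}$ is minimally generated in degrees $-h-j\alpha$, $1\le j\le n-1$. By $(\star)$, these are exactly the generator degrees of $\mathrm K_{k[H]}$. Since $(j-i)\alpha\notin H$ for $1\le j-i\le n-2$, the corresponding graded pieces of $\mathrm K_{S/J}$ are one-dimensional. So the injection $\mathrm K_{k[H]}\hookrightarrow\mathrm K_{S/J}$ coming from $0\to I_H/J\to S/J\to k[H]\to 0$ is onto. This forces $\mathrm K_{I_H/J}=0$, and hence $I_H=J$.
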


\begin{proof}
Since the factorization of \(h\) is unique,
Proposition~\ref{prop:Mi} gives \(q_i=M_i\) for all \(i\). Thus
$q_i a_i+\alpha\notin H$ and $(q_i+1)a_i+\alpha\in H$.
Put $c_i=(q_i+1)a_i+\alpha$. No factorization of \(c_i\) can contain \(a_i\), because subtracting one copy
of \(a_i\) would imply \(q_i a_i+\alpha\in H\).

For each \(i\), we define
\[
T_i=\{j\mid c_i-a_j\in H, 1\le j\le n\}.
\]
Since \(c_i\in H\setminus\{0\}\), the set \(T_i\) is nonempty, while
\(i\notin T_i\). Choose \(\sigma(i)\in T_i\) for every \(i\). Then
$c_i=a_{\sigma(i)}+u_i$ for some $u_i\in H$.

Consider the directed graph with the vertex set $\{1,2,...,n\}$ and directed edges from vertex $i$ to vertex $\sigma(i)$. It contains a directed cycle \(C\), say of length \(r\). Summing the relations
corresponding to the vertices of \(C\) gives
\[
\sum_{i\in C}(q_i+1)a_i+r\alpha
=
\sum_{i\in C}a_{\sigma(i)}+\sum_{i\in C}u_i.
\]
Since \(\sigma\) cyclically permutes the vertices of \(C\),
\[
\sum_{i\in C}a_{\sigma(i)}=\sum_{i\in C}a_i.
\]
Hence
\[
\sum_{i\in C}q_i a_i+r\alpha\in H.
\]
Adding \(\mathop\sum\limits_{i\notin C}q_i a_i\in H\), we obtain $h+r\alpha\in H$.

If \(r<n\), then \(1\le r\le n-1\), contradicting
\(h+r\alpha\in\PF(H)\). Therefore every possible choice of the arrows produces a single directed \(n\)-cycle. We claim that each \(T_i\) is a singleton. Fix one such \(n\)-cycle. If \(T_i\) contained another vertex different from the successor of \(i\) in the
cycle, replacing only the outgoing arrow from \(i\) by this new arrow would
produce a directed cycle of length strictly smaller than \(n\), a
contradiction. Thus \(T_i=\{\sigma(i)\}\).

It follows that every factorization of \(c_i\) uses only the generator
\(a_{\sigma(i)}\). Hence $c_i=r_{\sigma(i)}a_{\sigma(i)}$ for some positive integer \(r_{\sigma(i)}\). After relabelling the generators
along the \(n\)-cycle, we obtain
\[
(q_i+1)a_i+\alpha=r_{i+1}a_{i+1} \text{ for } 1\le \forall i\le n.
\]

Set
\[
J=
\rmI_2\begin{pmatrix}
X_1^{q_1+1}&X_2^{q_2+1}&\cdots&X_n^{q_n+1}\\
X_2^{r_2}&X_3^{r_3}&\cdots&X_1^{r_1}
\end{pmatrix}.
\]
Then the degree difference between the entries in each column is constant. More precisely,
\[
\deg X_{i+1}^{r_{i+1}}
-\deg X_i^{q_i+1}
=
r_{i+1}a_{i+1}-(q_i+1)a_i
=
\alpha.
\]
Hence by Lemma~\ref{lem:degree} we get the inclusion $J\subseteq I_H$.

We now prove equality. First, oberse that
\[
\sqrt{J+(X_1)}=(X_1,\ldots,X_n).
\]
Indeed, modulo \(X_1\), the minor of the first two columns gives
\(X_2^{q_2+1+r_2}\). Hence \(X_2\in\sqrt{J+(X_1)}\). Inductively, once
\(X_i\in\sqrt{J+(X_1)}\), the minor of columns \(i\) and \(i+1\) shows that
\[
X_{i+1}^{q_{i+1}+1+r_{i+1}}\in\sqrt{J+(X_1)}.
\]
Thus every \(X_i\) belongs to the radical. Hence $
\htop(J+(X_1))=n$.
By Krull's principal ideal theorem, $\htop J\ge n-1$. 

On the other hand, the general height bound for the ideal of maximal minors of a \(2\times n\) matrix gives $\htop J\le n-1$ (\cite[Theorem 2.1]{BV88}). Therefore $\htop J=n-1$. The Eagon--Northcott complex is consequently a graded minimal free resolution of $A=S/J$. 

Put \[A_0=\sum_{i=1}^n a_i,\quad
d_i=(q_i+1)a_i\text{ and, }
D=\sum_{i=1}^n d_i.
\]
Since \(h=\mathop\sum\limits_iq_i a_i\), we have $D=h+A_0$. After interchanging the two rows of the matrix defining \(J\), the entries in
the \(i\)-th column have degrees \(d_i+\alpha\) and \(d_i\). Hence the last
free module in the graded Eagon--Northcott resolution is
\[
\bigoplus_{j=1}^{n-1}S(-D-j\alpha).
\]
Since the graded canonical module of \(S\) is \(S(-A_0)\), dualizing this
resolution shows that the canonical module \(\mathrm K_A\) of $A$ is minimally generated in the degrees
\[
-h-\alpha,\ -h-2\alpha,\ \ldots,\ -h-(n-1)\alpha.
\]
By the assumption ($\star$) and the standard description of the canonical module of a numerical semigroup ring \cite{GW}, the canonical module \(\mathrm K_{k[H]}\) of $k[H]$ has one minimal generator in each of exactly the same
degrees.

Let $L=I_H/J$. If \(L=0\), there is nothing to prove. Assume \(L\ne0\). Since \(A\) is a
one-dimensional Cohen--Macaulay ring and \(L\subseteq A\), one has
\(H^0_{\mathfrak m}(L)=0\), where
\(\mathfrak m=(X_1,\ldots,X_n)\). Thus \(L\) is a one-dimensional
Cohen--Macaulay \(S\)-module. Applying
\(\operatorname{Ext}^{n-1}_S(-,S(-A_0))\) to the exact sequence
\[
0\longrightarrow L\longrightarrow A\longrightarrow k[H]\longrightarrow0
\]
therefore yields a short exact sequence
\[
0\longrightarrow
\mathrm K_{k[H]}
\xrightarrow{\ \varepsilon\ }
\mathrm K_A
\longrightarrow
\mathrm K_L
\longrightarrow0.
\]

For \(1\le r\le n-2\), one has $r\alpha\notin H$. Indeed, if \(r\alpha\in H\), then, since \(h+\alpha\in\PF(H)\),
\[
h+(r+1)\alpha=(h+\alpha)+r\alpha\in H,
\]
contradicting ($\star$). Consequently, if
\(1\le i<j\le n-1\), the difference
\[
(-h-i\alpha)-(-h-j\alpha)=(j-i)\alpha
\]
does not belong to \(H\). Hence, in each degree
\(-h-i\alpha\), no minimal generator of either canonical module can be
obtained from another one by multiplication with a positive-degree element
of \(S\).

It follows that the map induced by \(\varepsilon\) on
\[
\mathrm K_{k[H]}/\mathfrak m\mathrm K_{k[H]}
\longrightarrow
\mathrm K_A/\mathfrak m\mathrm K_A
\]
is, in each of the \(n-1\) generator degrees, a nonzero map between
one-dimensional \(k\)-vector spaces. Thus it is an isomorphism. By graded
Nakayama's lemma, \(\varepsilon\) is surjective and hence an isomorphism.
Therefore $\mathrm K_L=0$.
This is impossible for a nonzero one-dimensional Cohen--Macaulay module.
Hence \(L=0\), and therefore $J=I_H$.
\end{proof}
\begin{corollary}\label{cor38}
Assume ($\star$). Then $h$ has a unique factorization in
	$H$ if and only if $h+n\alpha$ has a unique factorization in $H$. When these
	conditions hold, the determinantal conclusion of Theorem \ref{thm:general} holds.
\end{corollary}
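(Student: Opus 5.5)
The plan is to prove the two implications separately, using Theorem~\ref{thm:equiv} and Theorem~\ref{thm:general} for the forward direction, and the boundary lemma together with the graph argument from the proof of Theorem~\ref{thm:general} for the converse. The last sentence of the corollary then follows from Theorem~\ref{thm:general} once the equivalence is known.

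\emph{($\Rightarrow$).} Assume $h=\sum q_ia_i$ is the unique factorization. Theorem~\ref{thm:general} gives the cyclic relations $(q_i+1)a_i+\alpha=r_{i+1}a_{i+1}$. Summing them over all $i$ gives
\[
h+n\alpha=\sum_{i=1}^n (r_i-1)a_i+\sum_{i=1}^n a_i-\sum_{i=1}^n a_i=\sum_{i=1}^n (r_i-1)a_i .
\]
So $h+n\alpha\in H$, with the explicit factorization $p=(r_1-1,\ldots,r_n-1)$. For uniqueness I would use $I_H=J$ from Theorem~\ref{thm:general}. Two factorizations of the same element give two monomials of the same degree whose difference lies in $I_H=J$. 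Since $J$ is generated by the binomial minors $X_i^{q_i+1}X_{j+1}^{r_{j+1}}-X_j^{q_j+1}X_{i+1}^{r_{i+1}}$, some monomial of degree $h+n\alpha$ must then be divisible by one of the terms $X_i^{q_i+1}X_{j+1}^{r_{j+1}}$ with $i\ne j$. Dividing by that term leaves a monomial of degree $h+n\alpha-(q_i+1)a_i-r_{j+1}a_{j+1}$. Using the relations, this degree equals
\[
h+(n-1)\alpha-(q_i+1)a_i-(q_j+1)a_j\in H .
\]
This implies $h+(n-1)\alpha\in H$, contradicting $(\star)$. The bookkeeping is the same as in the contradiction $h+r\alpha\in H$ in the proof of Theorem~\ref{thm:general}.

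\emph{($\Leftarrow$).} Assume $h$ does not have a unique factorization. By Theorem~\ref{thm:equiv}, $h'\neq h$, $h'+\alpha\in H$, and $h'-h=s\in H$ with $s>0$. The goal is to produce two distinct factorizations of $h+n\alpha$. Let $c_i=(M_i+1)a_i+\alpha\in H$, and repeat the graph construction from the proof of Theorem~\ref{thm:general} with $M_i$ in place of $q_i$. Every directed cycle $C$ in that graph has length $r=n$. A shorter cycle would give $\sum_{i\in C}M_ia_i+r\alpha\in H$. To turn this into $h+r\alpha\in H$ (a contradiction), I would take a factorization $(q_i)$ of $h$, use $q_i\le M_i$ from Proposition~\ref{prop:Mi}, and look for a factorization with $q_i=M_i$ on $C$; Proposition~\ref{prop:Mi} supplies this for one coordinate at a time.

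If the cycle is a full $n$-cycle, summing all $c_i$ gives $h'+n\alpha\in H$. Combined with the $T_i$-choice freedom, this should yield two genuinely different factorizations of $h+n\alpha$. One comes from $h'+n\alpha-s$; the other comes from a factorization of $h$ with some $q_i<M_i$.

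\emph{Main obstacle.} I expect the converse to be the hard step. Without uniqueness, the graph argument only controls $h'$, not $h$, so both the existence of a short cycle and the passage from factorizations of $h'+n\alpha$ to factorizations of $h+n\alpha$ are delicate. My first attempt would be to characterize factorizations of $h+n\alpha$ through the boundary lemma, Lemma~\ref{lem:boundary}. For a factorization $p$ of $h+n\alpha$ and any $j$ with $p_j>0$, I would show that $h+n\alpha-a_j$ relates to $h+(n-1)\alpha$ via $c_j$. This would force $p_j\ge M_j+1$-type bounds. Subtracting these bounds would then set up a bijection between $\Zfac(h+n\alpha)$ and $\Zfac(h)$, which gives both directions at once.
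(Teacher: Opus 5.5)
\paragraph{Forward implication.} This part is correct. You use $I_H=J$ from Theorem~\ref{thm:general} and observe that if a difference of two distinct monomials lies in $J$, each monomial is divisible by a term of some minor. Your degree computation then puts $h+(n-1)\alpha$ in $H$. The paper reaches the same contradiction more cheaply. Any factorization $w\ne(r_1-1,\ldots,r_n-1)$ of $h+n\alpha$ has $w_i\ge r_i$ for some $i$, so $h+(n-1)\alpha=(h+n\alpha-r_ia_i)+(q_{i-1}+1)a_{i-1}\in H$.

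\paragraph{The converse has a genuine gap.} Nothing in your converse is actually established. Arguing by contraposition puts you in the regime $h\ne h'$. There the graph built from $c_i=(M_i+1)a_i+\alpha$ gives, for a cycle $C$ of length $r<n$, only $\sum_{i\in C}M_ia_i+r\alpha\in H$. To reach $h+r\alpha\in H$ you would need $h-\sum_{i\in C}M_ia_i\in H$. That means a single factorization of $h$ attaining all the maxima $M_i$, $i\in C$, at once. Proposition~\ref{prop:Mi} supplies them only one coordinate at a time. So the claim that every cycle has length $n$ is unsupported. The ``two genuinely different factorizations'' of $h+n\alpha$ and the bijection $\Zfac(h+n\alpha)\leftrightarrow\Zfac(h)$ are never constructed.

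\paragraph{The missing idea.} The key is the reflection
\[
\PF(H)=\{(h+n\alpha)-\alpha,\,(h+n\alpha)-2\alpha,\ldots,(h+n\alpha)-(n-1)\alpha\}.
\]
So $(\star)$ also holds with base point $g=h+n\alpha$ and common difference $\beta=-\alpha$. Prove the forward implication for an arbitrary $\beta\ne0$ by the elementary route:
\begin{itemize}
\item The boundary lemma ($c+\beta\notin H\iff g-c\in H$) holds for either sign with the same proof.
\item Uniqueness of the factorization $(q_i)$ of $g$ gives $c_i=(q_i+1)a_i+\beta\in H$ and $c_i-a_i\notin H$.
\item $c_i\ne0$, since otherwise $-\beta\in H\setminus\{0\}$ and $g+\beta=(g+2\beta)+(-\beta)\in H$.
\item The cycle argument gives $(q_i+1)a_i+\beta=v_{i+1}a_{i+1}$, hence $g+n\beta=\sum_i(v_i-1)a_i$. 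This factorization is unique by the elementary argument above.
\end{itemize}
Applying this to $(g,\beta)=(h+n\alpha,-\alpha)$ yields uniqueness for $g+n\beta=h$ directly, with no contrapositive.

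Your own forward argument cannot be mirrored in this way. It goes through Theorem~\ref{thm:general}, whose hypotheses are $(\star)$ with $\alpha>0$ and a uniquely factored base point $h$. That is exactly what the converse is trying to prove. The sign-free elementary version is what makes the converse come for free.
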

\begin{proof}
We first use an argument similar to that in the first part of the proof of Theorem \ref{thm:general} to establish a factorization result covering both possible signs of the common difference.
\begin{claim}
Suppose $	\PF(H)=\{g+\beta,g+2\beta,\ldots,g+(n-1)\beta\}$ for $\beta\in\mathbb Z\setminus\{0\}$, and $g=\mathop\sum\limits_{i=1}^nq_i a_i$ is the unique factorization of $g$ in $H$. Then $g+n\beta$ also has a unique factorization.
\end{claim}
\noindent{\it Proof of Claim}. For every $c\in H$, the argument of Boundary lemma (Lemma 3.3) works for either sign
	of $\beta$ and gives the equivalence
	\begin{equation*}\label{eq:boundary-both-signs}
		c+\beta\notin H\quad\Longleftrightarrow\quad g-c\in H.
	\end{equation*}
	Indeed, if $c+\beta\notin H$, Lemma 2.3 gives an index
	$1\le j\le n-1$ for which
	$g+(j-1)\beta-c\in H$. If $j\ge2$, adding $c$ would put
	$g+(j-1)\beta\in\PF(H)$ in $H$, a contradiction. Hence $j=1$ so that $g-c\in H$. 
	Conversely, $g-c\in H$ and $c+\beta\in H$ would imply
	$g+\beta\in H$ which is impossible.
	
	By the unique representation of $g$, one has $g-(q_i+1)a_i\notin H$, whereas $g-q_i a_i\in H$.
	Thus  the above equivalence shows that
	$c_i:=(q_i+1)a_i+\beta\in H$ and $q_i a_i+\beta\notin H$.
	In fact $c_i\ne0$. If otherwise, $-\beta=(q_i+1)a_i\in H\setminus\{0\}$ which implies $g+\beta=(g+2\beta)-\beta\in H$ a contradiction. No factorization of $c_i$ uses
	$a_i$, since $c_i-a_i=q_i a_i+\beta\notin H$.
	
	Choose a generator $a_{\sigma(i)}$ in a factorization of each $c_i$.
	The directed graph given by $i\mapsto\sigma(i)$ has a cycle $C$, say of length $r$. Writing $c_i=a_{\sigma(i)}+u_i$ for $i\in C$,
	and summing around this cycle, we obtain
	\[
	\sum_{i\in C}q_i a_i+r\beta=\sum_{i\in C}u_i\in H.
	\]
	Adding $\mathop\sum\limits_{i\notin C}q_i a_i$ gives $g+r\beta\in H$.
If $r<n$, this contradicts $g+r\beta\in\PF(H)$. Therefore every choice of the arrows gives one cycle containing all $n$ vertices.
Consequently, each $c_i$ must involve only the generator at its successor, since choosing a different successor at any vertex would create a shorter cycle. After relabelling, there are positive integers
	$v_1,\ldots,v_n$ such that
	\begin{equation*}
		(q_i+1)a_i+\beta=v_{i+1}a_{i+1},\,  1\le\forall i\le n,
	\end{equation*}
with indices taken modulo \(n\). Summing both sides, we obtain
	$g+n\beta=\mathop\sum\limits_{i=1}^n (v_i-1)a_i$. Now if
	$g+n\beta=\mathop\sum\limits_{i=1}^n w_i a_i$ is another factorization, then
	$w_i\ge v_i$ for some $i$. Hence
	\[
	g+(n-1)\beta
	=\bigl(g+n\beta-v_i a_i\bigr)+(q_{i-1}+1)a_{i-1}\in H,
	\]
	contrary to $g+(n-1)\beta\in\PF(H)$. This proves Claim. \qed

We now apply Claim with $(g,\beta)=(h,\alpha)$ to obtain
	that uniqueness of $h$ implies uniqueness of $h+n\alpha$.
	Conversely, by writing
	\[
	\PF(H)=\{(h+n\alpha)-\alpha,(h+n\alpha)-2\alpha,\ldots,
	(h+n\alpha)-(n-1)\alpha\}
	\]
and applying the Claim again with $(g,\beta)=(h+n\alpha,-\alpha)$, we see that
	uniqueness of $h+n\alpha$ implies uniqueness of
	$h+n\alpha+n(-\alpha)=h$. The final assertion follows from Theorem \ref{thm:general}.
\end{proof}
As a direct consequence, we recover the implication $(2)\Rightarrow (4)$ of \cite[Theorem 2]{KM} in the case where $H$ has maximal embedding dimension.
\begin{corollary}[\cite{KM}, Theorem 2]
Let $H=\left<a_1,a_2,\ldots,a_n\right>$ be a numerical semigroup minimally generated by $n$ elements $a_1<a_2<\cdots<a_n$. Suppose that $H$ has maximal embedding dimension, that is, $a_1=n$ and assume $\PF(H)=\{h+\alpha,h+2\alpha,\ldots,h+(n-1)\alpha\}$. Then, there exists a positive integer \(s\) such that
\[
I_H=
\rmI_2\begin{pmatrix}
	X_1^{s}&X_2&\cdots&X_{n-1}&X_{n}\\
	X_2&X_3&\cdots&X_n&X_1^{s+\alpha}
\end{pmatrix}.
\]
\end{corollary}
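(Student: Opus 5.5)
Our plan is to deduce this from Theorem~\ref{thm:general}, so the work is to show that $h$ has a unique factorization and then to identify the matrix. Since $a_1=n$ is the multiplicity and $H$ has maximal embedding dimension, $\Ap(H,a_1)=\{0,a_2,\ldots,a_n\}$ and the nonzero Apéry elements are pairwise incomparable (the sum of two of them is never in the Apéry set). Lemma~\ref{lem:PF-Apery} then gives $\PF(H)=\{a_2-a_1,\ldots,a_n-a_1\}$. Comparing with ($\star$) and using $a_2<\cdots<a_n$, we get $a_{i+1}-a_1=h+i\alpha$ for $1\le i\le n-1$. In particular $a_{i+1}=a_i+\alpha$ for $2\le i\le n-1$, and $h=a_2-a_1-\alpha$.

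Next we show that $h$ has a unique factorization, via Theorem~\ref{thm:equiv}(3): it suffices to compute $h'$ and check $h'+\alpha\notin H$. For $2\le i\le n-1$, $a_i+\alpha=a_{i+1}\in H$, so $m_i=1$ and $M_i=0$. For $i=n$, $a_n+\alpha=a_1+h+n\alpha$. Suppose it lay in $H$. Because $a_n+\alpha\equiv a_n+\alpha-a_1 \pmod{a_1}$ and every element of $H$ is an Apéry element plus a multiple of $a_1$, we would get either $a_n+\alpha-a_1=h+n\alpha\in H$, or $a_n+\alpha$ equal to some $a_j$ plus a positive multiple of $a_1$. The latter reduces to $h+n\alpha - (a_j-a_1)\in H$ type conditions. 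This may or may not exclude things, so rather than pin $M_n$ down exactly, we will argue with Proposition~\ref{prop:Mi} directly. Any factorization of $h$ uses no $a_i$ with $2\le i\le n-1$, since $M_i=0$ there. Moreover $h<a_2\le a_n$ (as $\alpha>0$), so it uses no $a_n$ either. Hence the only factorization is $h=q_1a_1$, and it is unique. Note $h\in H$ by Lemma~\ref{lem:h-alpha}, so $a_1\mid h$; write $h=(s-1)a_1$ with $s\ge1$.

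Now apply Theorem~\ref{thm:general} with $q_1=s-1$ and $q_i=0$ for $i\ge2$. It yields a cyclic ordering with $(q_i+1)a_i+\alpha=r_{\sigma(i)}a_{\sigma(i)}$. It remains to identify the cycle as $1\to2\to\cdots\to n\to1$, with the exponents shown. For $2\le i\le n-1$, $a_i+\alpha=a_{i+1}$, and since the target is a pure power of a single generator, we get $\sigma(i)=i+1$ with $r_{i+1}=1$. For $i=1$, $sa_1+\alpha=h+a_1+\alpha=a_2$, so $\sigma(1)=2$ with $r_2=1$. The cycle then forces $\sigma(n)=1$, so $a_n+\alpha=r_1a_1$. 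Using $a_n=a_1+h+(n-1)\alpha$, this gives $r_1a_1=sa_1+n\alpha$. Since $a_1=n$, $r_1=s+\alpha$. Substituting into the matrix of Theorem~\ref{thm:general} gives exactly the displayed matrix.

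The main obstacle is the identification of $\PF(H)$ with the shifted Apéry set and the resulting order of the generators; the rest is bookkeeping once uniqueness is seen from $h<a_2$ together with $M_i=0$.
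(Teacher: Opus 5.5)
Your proof is correct and follows essentially the same route as the paper. You read off $\PF(H)=\{a_2-a_1,\ldots,a_n-a_1\}$ from maximal embedding dimension, match it with $(\star)$, and obtain the unique factorization $h=(s-1)a_1$. You then verify the cyclic relations $sa_1+\alpha=a_2$, $a_i+\alpha=a_{i+1}$, $a_n+\alpha=(s+\alpha)a_1$ and invoke Theorem~\ref{thm:general}. Your detour through $M_i=0$ and the abandoned computation of $M_n$ are unnecessary, since $h<a_2$ alone already excludes every generator except $a_1$, exactly as in the paper.
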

\begin{proof}
Since $H$ has maximal embedding dimension, $\PF(H)=\{a_2-a_1,\ldots,a_n-a_1\}$. It implies that $h+(i-1)\alpha=a_i-a_1$ for every $i=2,\ldots,n$. Since $0\le h < a_2$ and $h\in H$, every factorization of $h$ uses
only $a_1$; this is because all other minimal generators are at least $a_2$. Hence $h = (s-1)a_1$ for a uniquely determined integer $s\ge 1$, and this is the unique factorization of $h$. Moreover, there are equalities $$sa_1+\alpha=a_2,\, a_{i-1}+\alpha=a_i\text{ for all } 2\le i\le n, \text{ and } a_n+\alpha=(s+\alpha)a_1.$$ 
The conclusion therefore follows from Theorem \ref{thm:general}.

\end{proof}

\section{Affine-closed numerical semigroups}

Let \(a\ge2\), \(m>1\), and \(b\in\ZZ\) with \(\gcd(m,b)=1\) and $b\ge -(a-2)m-2$. We consider the affine map $T(z)=az+b$.
Put $A_0=0 \text{ and }$
\[A_i=\frac{a^i-1}{a-1}=1+a+\cdots+a^{i-1} \text{ for all } i>0.
\]
Set $d=(a-1)m+b$. The orbit of $m$ under $T$ is given by $s_i=T^i(m)=m+dA_i$.

In this section, we set $m=cA_{n-1}+1$ for $1\le c\le a$ and focus our attention on the semigroup $
H=\langle s_0,s_1,\ldots,s_{n-1}\rangle$ generated by the orbit of $m$ under the affine map $T$, where $s_0 = m$. Under this hypotheses, the semigroup $
H$ is minimally generated by \(n\) elements. Moreover,
\cite[Theorem~8.1]{AMO} provides the pseudo-Frobenius numbers
\begin{equation}\label{eq:PF-affine}
\PF(H)=
\{\rho,\rho-b,\rho-2b,\ldots,\rho-(n-2)b\},
\end{equation}
where $\rho=d(m-1)+(c-1)m$.

We shall use the elementary identity
\begin{equation}\label{eq:Ai-sum}
(a-1)\sum_{i=1}^{n-2}A_i
=A_{n-1}-(n-1).
\end{equation}
Indeed, \((a-1)A_i=a^i-1\), and summing yields
\[
(a-1)\sum_{i=1}^{n-2}A_i
=\sum_{i=1}^{n-2}(a^i-1)
=A_{n-1}-(n-1).
\]

\subsection{The case \(b>0\)}

Assume \(b>0\). Then \eqref{eq:PF-affine}, written in increasing order, is
\[
\PF(H)=
\{h+b,h+2b,\ldots,h+(n-1)b\},
\]
where $h=\rho-(n-1)b$.

\begin{theorem}\label{thm:bpositive}
Assume \(b>0\). Then
\begin{equation}\label{eq:h-positive}
h=(a-1)(s_0+s_1+\cdots+s_{n-2})+(c-1)s_{n-1}.
\end{equation}
Moreover, this is the unique factorization of \(h\).
\end{theorem}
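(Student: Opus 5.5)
The plan has two parts. First I verify the identity \eqref{eq:h-positive} by direct computation. Then I get uniqueness from Proposition~\ref{prop:Mi}: I bound every coordinate of every factorization of $h$ from above, and observe that the right-hand side of \eqref{eq:h-positive} already attains all these bounds simultaneously. I work with the generators $s_0,\ldots,s_{n-1}$ in place of $a_1,\ldots,a_n$ and with $\alpha=b>0$, so that ($\star$) holds by \eqref{eq:PF-affine} written in increasing order.

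\emph{Step 1: the identity.} Expand the right-hand side using $s_i=m+dA_i$:
\[
(a-1)\Bigl((n-1)m+d\sum_{i=1}^{n-2}A_i\Bigr)+(c-1)(m+dA_{n-1}).
\]
By \eqref{eq:Ai-sum}, the term $(a-1)d\sum_{i=1}^{n-2}A_i$ equals $d(A_{n-1}-(n-1))$. Since $m=cA_{n-1}+1$, we have $cdA_{n-1}=d(m-1)$. Collecting terms gives
\[
(a-1)(n-1)m-(n-1)d+(c-1)m+d(m-1)=\rho-(n-1)\bigl(d-(a-1)m\bigr)=\rho-(n-1)b=h.
\]
So \eqref{eq:h-positive} holds, and in particular it is a factorization of $h$ in $H$.

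\emph{Step 2: upper bounds on the $M_i$.} I use the affine relations. For $0\le i\le n-2$ we have $as_i+b=T(s_i)=s_{i+1}\in H$, so $m_i\le a$, i.e.\ $M_i\le a-1$. For $i=n-1$, compute
\[
cs_{n-1}+b=cm+d(m-1)+b=dm+cm-(a-1)m=(d+c+1-a)\,s_0,
\]
using again $cA_{n-1}=m-1$ and $d-b=(a-1)m$. The hypothesis $b\ge-(a-2)m-2$ gives $d+c+1-a\ge0$, so this element lies in $H$, and hence $M_{n-1}\le c-1$. (This is the same relation that underlies the last column of $X_c$.) By Proposition~\ref{prop:Mi}, every factorization $(q_0,\ldots,q_{n-1})$ of $h$ therefore satisfies $q_i\le a-1$ for $i\le n-2$ and $q_{n-1}\le c-1$.

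\emph{Step 3: uniqueness.} Let $(q_0,\ldots,q_{n-1})$ be any factorization of $h$ and let $(p_0,\ldots,p_{n-1})=(a-1,\ldots,a-1,c-1)$ be the one from Step 1. Then
\[
0=\sum_i (p_i-q_i)s_i,
\]
and each term is nonnegative by Step 2. Since every $s_i>0$, this forces $q_i=p_i$ for all $i$. Equivalently, Steps 1 and 2 give $h\le h'\le h$, so $h'=h$, and Theorem~\ref{thm:equiv} applies.

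The only delicate point I expect is bookkeeping rather than ideas. One must index correctly: $s_0=m$ plays the role of $a_1$, and $A_0=0$. One must also check that $b>0$ is exactly what makes $\alpha=b$ and makes $h=\rho-(n-1)b$ the correct base point. The genuinely substantive input is the wrap-around relation $cs_{n-1}+b=(d+c+1-a)s_0$, which caps the last coordinate at $c-1$ rather than $a-1$.
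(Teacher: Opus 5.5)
Your proof is correct, and it takes a genuinely different route from the paper for the uniqueness part.

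The paper works by pure arithmetic on an arbitrary factorization $\sum x_i s_i$. It writes $h=Lm+dW$ and uses $\gcd(m,d)=1$ and $b>0$ to force $L=L_0$ and $W=W_0=m-n$. It then invokes Proposition~4.3 and Lemma~4.4 of \cite{AMO} on minimal-length representations of $W_0$ by $A_1,\ldots,A_{n-1}$, uses a congruence modulo $a$ to get $x_0=a-1$, and finishes with a reduction-procedure argument showing that the minimal-length representation is unique.

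You instead use the pseudo-Frobenius set \eqref{eq:PF-affine}, which the paper already imports in order to define $h$, together with the affine relations $as_i+b=s_{i+1}$ and $cs_{n-1}+b=(d+c+1-a)s_0$. These are the same relations the paper later records in \eqref{eq:affine-degree1} and \eqref{eq:affine-degree2}. In fact you only need the easy half of Proposition~\ref{prop:Mi}. Suppose some factorization of $h$ had $q_i\ge a$ for some $i\le n-2$. Then $h+b=(h-as_i)+s_{i+1}\in H$. Likewise, if $q_{n-1}\ge c$, then $h+b=(h-cs_{n-1})+(d+c+1-a)s_0\in H$. Either way this contradicts $h+b=\rho-(n-2)b\in\PF(H)$. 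Since your explicit factorization attains all these caps at once, it is the only one.

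Your route has several advantages.
\begin{itemize}
\item It is much shorter.
\item It avoids the external representation lemmas of \cite{AMO} and the length-preserving-step analysis.
\item It adapts immediately to $b<0$. There the relations $s_i+\alpha=as_{i-1}$, with $\alpha=-b$, force $q_1=\cdots=q_{n-1}=0$, so it also proves Theorem~\ref{thm:bnegative}.
\end{itemize}

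The paper's route buys independence from the pseudo-Frobenius computation. It shows that the integer $(a-1)(s_0+\cdots+s_{n-2})+(c-1)s_{n-1}$ has a unique factorization using only $b>0$ and $\gcd(m,b)=1$. Your argument is conditional on \cite[Theorem~8.1]{AMO}. That is harmless here, since the paper relies on that result throughout Section~4.

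One small simplification: for $d+c+1-a\ge0$ you do not need the lower bound on $b$. When $b>0$ one has directly $d>(a-1)m\ge a-1$.
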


\begin{proof}
Since \(m-1=cA_{n-1}\), the equality \eqref{eq:Ai-sum} gives
\[
(a-1)\sum_{i=1}^{n-2}A_i+(c-1)A_{n-1}
=cA_{n-1}-(n-1)=m-n.
\]
Hence the right-hand side of \eqref{eq:h-positive} is
\[
\bigl((a-1)(n-1)+c-1\bigr)m+d(m-n).
\]
Since \(b=d-(a-1)m\), this is equal to
\[
d(m-1)+(c-1)m-(n-1)b=h.
\]

Now suppose $h=\mathop\sum\limits_{i=0}^{n-1}x_i s_i$ is any factorization of $h$ in $H$. Put $L=\mathop\sum\limits_{i=0}^{n-1}x_i$ and $W=\mathop\sum\limits_{i=1}^{n-1}x_i A_i$. Then \(h=Lm+dW\). For the factorization \eqref{eq:h-positive}, put $L_0=(a-1)(n-1)+c-1$
 and $W_0=m-n$. We get $h=L_0m+dW_0$. Consequently, \[
(L-L_0)m+d(W-W_0)=0.
\]
Since $\gcd(m,d)=\gcd(m,b)=1$, there exists \(t\in\ZZ\) such that $W=W_0+tm$ and $L=L_0-td$.
Because \(0\le W_0=m-n<m\) and \(W\ge0\), we must have \(t\ge0\).

If \(t\ge1\), then $L\le L_0-d$. Since \(b>0\),
\[
d=(a-1)m+b>(a-1)m.
\]
Moreover \(m=cA_{n-1}+1\ge n\), while
\[
L_0=(a-1)(n-1)+c-1\le(a-1)n.
\]
Hence \(d>L_0\), which would imply \(L<0\), a contradiction. Therefore
\(t=0\) which implies that $L=L_0$ and $W=W_0$. 

Put $\ell=\mathop\sum\limits_{i=1}^{n-1}x_i$  and $\ell_0=(a-1)(n-2)+c-1$. Then the vector $q=(a-1,\ldots,a-1,c-1)$ satisfies $W_0=\mathop\sum\limits_{i=1}^{n-1}q_iA_i$. Since \(W_0=m-n<A_n\), Proposition~4.3 and Lemma~4.4 of \cite{AMO} show that this canonical representation has minimum length among all representations of \(W_0\) in terms of
\(A_1,\ldots,A_{n-1}\). Therefore $\ell\ge\ell_0$.

On the other hand, one has $x_0=L_0-\ell\ge0$, whence $\ell\le L_0=\ell_0+(a-1)$. Since $\mathop\sum\limits_{i=1}^{n-1}(x_i-q_i)A_i=0$,
multiplying by \(a-1\) and using \((a-1)A_i=a^i-1\) gives
\[
\sum_{i=1}^{n-1}(x_i-q_i)a^i=\ell-\ell_0.
\]
The left-hand side is divisible by \(a\). Hence $\ell-\ell_0\equiv0\pmod a$. Together with $0\le\ell-\ell_0\le a-1$,
this yields \(\ell=\ell_0\). Thus \(x_0=a-1\).

It remains to show that the canonical representation $q = (a-1,\ldots, a-1, c-1)$ of $W_0 = m-n$ is the only representation of length $\ell_0$. Indeed, let $x = (x_1,\ldots, x_{n-1})$ be such a representation. Apply the reduction procedure in the proof of \cite[Lemma 4.4]{AMO} to $x$. It terminates at the canonical representation $q$; every step weakly decreases total length. Since $\ell_0$ is the minimum length, every step in this sequence must preserve length. In a length-preserving step, \cite[Lemma 4.4]{AMO} has $k > 1$; it increases coordinate $k-1$ by $a$ and also increases a coordinate with higher index. Consequently, the output of this step cannot be a canonical representation; because a canonical vector cannot have a coordinate at least $a$ while a higher coordinate is positive. Thus no length-preserving step can be the last step before $q$. The reduction sequence has no steps and $x = q$. Together with $x_0 = a-1$, this proves the claimed uniqueness.
\end{proof}

\subsection{The case \(b<0\)}

Assume \(b<0\), and put $\alpha=-b>0$. Then \eqref{eq:PF-affine} becomes
\[
\PF(H)=\{\rho,\rho+\alpha,\ldots,\rho+(n-2)\alpha\}.
\]
Thus $h=\rho-\alpha=\rho+b.$
\begin{theorem}\label{thm:bnegative}
Assume \(b<0\). Then $h=(d+c-a)s_0$ and this is the unique factorization of \(h\).
\end{theorem}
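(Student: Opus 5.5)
The plan has two parts: first verify the displayed formula for $h$ by direct computation, then prove uniqueness with the same ``length/weight'' bookkeeping as in the proof of Theorem~\ref{thm:bpositive}. In this case the argument should be considerably easier, because $W_0=0$.

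\emph{The formula.} Since $h=\rho+b=d(m-1)+(c-1)m+b$, I would compute
\[
(d+c-a)m-\bigl(d(m-1)+(c-1)m+b\bigr)=d-(a-1)m-b=0,
\]
using $d=(a-1)m+b$. So $h=(d+c-a)s_0$. To see that this is a genuine factorization, I need $d+c-a\ge 0$. The hypothesis $b\ge -(a-2)m-2$ gives $d\ge m-2$. Since $m=cA_{n-1}+1\ge 1+A_2=a+2$ (as $n\ge 3$ and $c\ge1$), this yields $d\ge a$, so $d+c-a\ge c\ge 1$. Also $d>0$, which is needed below.

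\emph{Uniqueness.} Let $h=\sum_{i=0}^{n-1}x_is_i$ be any factorization, and put $L=\sum_{i=0}^{n-1}x_i$ and $W=\sum_{i=1}^{n-1}x_iA_i$. Then $h=Lm+dW$, and comparing with $h=(d+c-a)m+d\cdot 0$ gives
\[
\bigl(L-(d+c-a)\bigr)m+dW=0.
\]
Since $\gcd(m,d)=\gcd(m,b)=1$, there is $t\in\ZZ$ with $W=tm$ and $L=d+c-a-td$. Because $W\ge 0$, we get $t\ge0$.

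Now suppose $t\ge1$. Then $L\le c-a\le 0$, using $c\le a$ and $d>0$. On the other hand $W=tm>0$, so some $x_i$ with $i\ge1$ is positive, which forces $L\ge1$. This is a contradiction, so $t=0$. Then $W=0$, and since every $A_i\ge1$ for $i\ge1$, all $x_i=0$ for $i\ge1$. Finally $x_0=L=d+c-a$, so the factorization is the one given.

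The only delicate point is checking the inequalities $d>0$ and $d\ge a$ from the lower bound on $b$ and the choice $m=cA_{n-1}+1$. Once these hold, the divisibility step closes the argument immediately, with no need for the minimal-length representation results of \cite{AMO}.
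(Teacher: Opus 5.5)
Your proposal is correct and follows essentially the same argument as the paper. Both use the bookkeeping $h=Lm+dW$, use $\gcd(m,d)=1$ to write $W=tm$ and $L=(1-t)d+c-a$, and exclude $t\ge1$ by playing $L\le c-a\le0$ against $W>0$. Two things you do differently are only refinements: you derive $d\ge m-2\ge a$ explicitly from $b\ge-(a-2)m-2$ and $m\ge a+2$, which makes precise the positivity of $d$ and of $d+c-a$ that the paper uses tacitly, and you merge the cases $t=1$ and $t\ge2$, which is a harmless simplification.
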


\begin{proof}
Since \(b=d-(a-1)m\), there are equalities
\[
\begin{aligned}
h
&=d(m-1)+(c-1)m+b\\
&=d(m-1)+(c-1)m+d-(a-1)m\\
&=(d+c-a)m.
\end{aligned}
\]
Since \(s_0=m\), there is an equality $h=(d+c-a)s_0$.

Suppose $h=x_0 s_0+x_1 s_1+\cdots+x_{n-1} s_{n-1}$. Put $L=\mathop\sum\limits_{i=0}^{n-1}x_i$ and $ W=\sum_{i=1}^{n-1}x_iA_i$. Then $(d+c-a)m=Lm+dW$, so $(d+c-a-L)m=dW$. Hence, since \(\gcd(m,d)=1\), there exists \(t\in\NN\) such that
$W=tm$ and $d+c-a-L=td$.
Thus $L=(1-t)d+c-a$. 

If \(t=1\), then \(L=c-a\le0\). When \(c<a\), this is impossible because
\(W=m>0\); when \(c=a\), one has \(L=0\) while \(W=m>0\), again impossible.
If \(t\ge2\), then \(L<0\), also impossible.

Thus \(t=0\), and hence \(W=0\). We then get $
x_1=\cdots=x_{n-1}=0 $ and $x_0=d+c-a$ so that $h=(d+c-a)s_0$ is the unique factorization as desired.
\end{proof}

Combining the two cases yields the following.

\begin{theorem}\label{thm:affine-unique}
Under the hypotheses above, write \eqref{eq:PF-affine} in increasing order as
\[
\PF(H)=
\{h+\alpha,h+2\alpha,\ldots,h+(n-1)\alpha\},
\qquad \alpha=|b|.
\]
Then \(h\) has a unique factorization in \(H\). Consequently,
Conjecture~\ref{conj:main} holds for \(H\).
\end{theorem}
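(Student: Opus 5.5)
We will prove Theorem~\ref{thm:affine-unique} by splitting according to the sign of \(b\), using the two preceding theorems, and then invoking the general criterion of Section~3. Since \(\gcd(m,b)=1\) and \(m>1\), we have \(b\neq 0\), so exactly one of \(b>0\) or \(b<0\) holds. We may also take \(n\ge3\), since \(H\) is minimally generated by the \(n\ge3\) elements \(s_0,\ldots,s_{n-1}\). The first step is to check, in each case, that \eqref{eq:PF-affine} rewritten in increasing order really has the shape \((\star)\) with \(\alpha=|b|\). We must also confirm that the resulting \(h\) is the same one treated in Theorems~\ref{thm:bpositive} and~\ref{thm:bnegative}.

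In the case \(b>0\), the largest element of \eqref{eq:PF-affine} is \(\rho\) and the smallest is \(\rho-(n-2)b\). Taking \(h=\rho-(n-1)b\) gives \(\PF(H)=\{h+b,\ldots,h+(n-1)b\}\). Theorem~\ref{thm:bpositive} supplies the factorization \(h=(a-1)(s_0+\cdots+s_{n-2})+(c-1)s_{n-1}\) and its uniqueness. Since \(a\ge2\) and \(c\ge1\), this also shows \(h\ge0\).

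In the case \(b<0\), put \(\alpha=-b\). The smallest pseudo-Frobenius number is \(\rho\), and \(h=\rho-\alpha\) gives \(\PF(H)=\{h+\alpha,\ldots,h+(n-1)\alpha\}\). Theorem~\ref{thm:bnegative} gives \(h=(d+c-a)s_0\) as the unique factorization. Nonnegativity of \(h\) is automatic from Lemma~\ref{lem:h-alpha}, which shows \(h\in H\) once \((\star)\) holds. Equivalently, it follows from \(d+c-a\ge0\), which Theorem~\ref{thm:bnegative} implicitly uses when it writes \(x_0=d+c-a\).

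For the ``consequently'' part, \((\star)\) and the unique factorization of \(h\) let us apply Theorem~\ref{thm:general}. It yields, after a permutation of the generators, a cyclic matrix of the form in condition~(1) of Conjecture~\ref{conj:main}, with \(\ell_i=q_i+1\) and \(m_i=r_i\) positive, and \(I_H\) equal to its ideal of \(2\times2\) minors. So (2) implies (1). The implication (1)\(\Rightarrow\)(2) is known from \cite{KM}. Moreover, we have just shown that (2) holds for \(H\), so the equivalence holds for \(H\).

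The main obstacle is the bookkeeping identification that the \(h\) defined in the statement, by ordering \(\PF(H)\) increasingly, coincides in each sign case with the \(h\) of the corresponding earlier theorem. Once this is checked, everything reduces to citing Theorems~\ref{thm:bpositive}, \ref{thm:bnegative} and~\ref{thm:general}.
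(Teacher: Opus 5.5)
Your proposal is correct and follows the paper's proof: you split by the sign of \(b\), cite Theorem~\ref{thm:bpositive} or Theorem~\ref{thm:bnegative} for the unique factorization of \(h\), and then apply Theorem~\ref{thm:general} for the determinantal conclusion. Your extra bookkeeping is accurate and only makes explicit what the paper leaves implicit, namely that \(b\neq0\), that the \(h\) obtained by ordering \(\PF(H)\) is the \(h\) of the earlier theorems, and that \(h\ge0\).
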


\begin{proof}
If \(b>0\), the assertion follows from Theorem~\ref{thm:bpositive}. If
\(b<0\), it follows from Theorem~\ref{thm:bnegative}. The determinantal
assertion then follows from Theorem~\ref{thm:general}.
\end{proof}

We now determine the defining ideal explicitly. Let $S=k[X_1,\ldots,X_n]$ be the graded polynomial ring with $\deg X_i=s_{i-1}$ for all $1\le i\le n$, and set the matrix
\[
\X_c=
\begin{pmatrix}
X_1^a&X_2^a&\cdots&X_{n-1}^a&X_n^c\\
X_2&X_3&\cdots&X_n&X_1^{d+c+1-a}
\end{pmatrix}.
\]

We first verify directly, independently of the determinantal containment in
\cite[Remark~8.2]{AMO}, that $\rmI_2(\X_c)\subseteq I_H$. Since $
A_i=aA_{i-1}+1$,
we have, for \(1\le i\le n-1\),
\[
\begin{aligned}
s_i
&=m+dA_i\\
&=m+d(aA_{i-1}+1)\\
&=a(m+dA_{i-1})+d-(a-1)m\\
&=as_{i-1}+b.
\end{aligned}
\]
Consequently,
\begin{equation}\label{eq:affine-degree1}
\deg X_{i+1}-\deg X_i^a
=s_i-as_{i-1}=b
\qquad(1\le i\le n-1).
\end{equation}
Moreover, using \(m-1=cA_{n-1}\), we obtain
\[
\begin{aligned}
cs_{n-1}+b
&=cm+cdA_{n-1}+b\\
&=cm+d(m-1)+b\\
&=cm+dm-d+b\\
&=(d+c)m-(d-b)\\
&=(d+c+1-a)m.
\end{aligned}
\]
Hence
\begin{equation}\label{eq:affine-degree2}
\deg X_1^{d+c+1-a}-\deg X_n^c
=(d+c+1-a)s_0-cs_{n-1}=b.
\end{equation}
Thus the difference between the weighted degrees of the bottom and top entries
is the same, namely \(b\), in every column of \(\X_c\). By
Lemma~\ref{lem:degree}, we obtain the inclusion
\begin{equation}\label{eq:Xc-containment}
\rmI_2(\X_c)\subseteq I_H.
\end{equation}
Notice that this argument works uniformly for both \(b>0\) and \(b<0\).

\begin{theorem}\label{thm:Xc}
Under the hypotheses of Theorem~\ref{thm:affine-unique}, we have
$I_H=\rmI_2(\X_c)$. In particular, the Eagon--Northcott complex associated to \(\X_c\) is the graded minimal free resolution of \(k[H]\).
\end{theorem}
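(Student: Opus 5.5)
Since Theorem~\ref{thm:affine-unique} shows that $h$ has a unique factorization, Theorem~\ref{thm:general} applies. It gives $I_H=\rmI_2(M)$ for a cyclic matrix $M$ whose top entries are $X_i^{q_i+1}$, with $(q_1,\ldots,q_n)$ the unique factorization of $h$. The plan is therefore to identify $M$ with $\X_c$, possibly after interchanging the two rows, which does not change the ideal of $2\times2$ minors. Both cases $b>0$ and $b<0$ have to be handled, because $\alpha=|b|$ and the direction of the cycle depends on the sign of $b$. Throughout, the variable $X_{i+1}$ corresponds to $s_i$.

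\emph{Case $b>0$.} Here $\alpha=b$, and Theorem~\ref{thm:bpositive} gives $q=(a-1,\ldots,a-1,c-1)$ in the variables $X_1,\ldots,X_n$. Hence the quantities $c_i=(q_i+1)s_{i-1}+\alpha$ are $as_{i-1}+b$ for $i\le n-1$ and $cs_{n-1}+b$ for $i=n$. By \eqref{eq:affine-degree1} and \eqref{eq:affine-degree2} these equal $s_i$ and $(d+c+1-a)s_0$. The proof of Theorem~\ref{thm:general} shows that each $c_i$ equals $r_{\sigma(i)}a_{\sigma(i)}$ for a uniquely determined successor. The successors are therefore $i\mapsto i+1$ with $r=1$, and $n\mapsto 1$ with $r_1=d+c+1-a$. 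No relabelling is needed, and the matrix of Theorem~\ref{thm:general} is literally $\X_c$.

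\emph{Case $b<0$.} Here $\alpha=-b$, and Theorem~\ref{thm:bnegative} gives $q=(d+c-a,0,\ldots,0)$. The relations now run backwards. For $2\le i\le n-1$ we have $c_i=s_{i-1}+\alpha=a s_{i-2}$, using $s_{i-1}=as_{i-2}+b$, so the successor is $i-1$ with exponent $a$. For $i=n$ we get $c_n=s_{n-1}+\alpha=as_{n-2}$ in the same way. For $i=1$, $c_1=(d+c+1-a)s_0-b=cs_{n-1}$ by \eqref{eq:affine-degree2}, so the successor is $n$ with exponent $c$. The cycle is thus $1\to n\to n-1\to\cdots\to 2\to 1$. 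Writing the matrix of Theorem~\ref{thm:general} along this cycle, the columns are
\[
\bigl(X_1^{d+c+1-a},X_n^{c}\bigr)^{T},\quad \bigl(X_n,X_{n-1}^a\bigr)^{T},\ \ldots,\ \bigl(X_2,X_1^a\bigr)^{T}.
\]
After swapping the two rows and reordering the columns, this is exactly $\X_c$, and column reordering also does not change $\rmI_2$. One must check that the hypothesis $b\ge -(a-2)m-2$ makes $d+c+1-a\ge1$, so that all exponents are positive. This follows since $d+c+1-a\ge (a-1)m-(a-2)m-2+c+1-a=m+c-a-1\ge cA_{n-1}+c-a>0$.

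The only delicate point is this case bookkeeping: confirming that the unique successors forced in the proof of Theorem~\ref{thm:general} are the ones computed above. Since those successors are unique and the identities above exhibit valid ones, no extra argument is needed beyond the degree identities. The Eagon--Northcott statement then follows from Theorem~\ref{thm:general}.
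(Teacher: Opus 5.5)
Your proposal is correct and follows the paper's own proof essentially step for step. You invoke Theorem~\ref{thm:general} via the unique factorizations of Theorems~\ref{thm:bpositive} and~\ref{thm:bnegative}, and read off the cyclic relations from \eqref{eq:affine-degree1} and \eqref{eq:affine-degree2}. In the case $b<0$ you run the cycle backwards along $s_0,s_{n-1},\ldots,s_1$, and then swap the rows and reverse the columns to obtain $\X_c$. Your extra check that $d+c+1-a\ge 1$ is a harmless detail that the paper leaves implicit.
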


\begin{proof}
The containment $\rmI_2(\X_c)\subseteq I_H$ has already been proved in \eqref{eq:Xc-containment} by the weighted-degree
criterion. It remains to identify the matrix supplied by
Theorem~\ref{thm:general}. 

Suppose first that \(b>0\). Then
\(\alpha=b\), and Theorem~\ref{thm:bpositive} gives
\[
h=(a-1)s_0+\cdots+(a-1)s_{n-2}+(c-1)s_{n-1}.
\]
Thus the exponents in the first row of the matrix are $a,\ldots,a,c$. Moreover, the relations in \eqref{eq:affine-degree1} and \eqref{eq:affine-degree2} give
$as_{i-1}+b=s_i$, for all $1\le i\le n-1$, and
\[
cs_{n-1}+b=(d+c+1-a)s_0.
\]
Hence the second-row exponents are $1,\ldots,1,d+c+1-a$.
Therefore the matrix in Theorem~\ref{thm:general} is precisely \(X_c\).

Now suppose \(b<0\), and put \(\alpha=-b\). By
Theorem~\ref{thm:bnegative}, we have that the decomposition
\[
h=(d+c-a)s_0
\]
is the unique factorization of \(h\). The affine relations can be rewritten as
\[
s_i+\alpha=as_{i-1}
\qquad(1\le i\le n-1)
\]
and
\[
(d+c+1-a)s_0+\alpha=cs_{n-1}.
\]
Ordering the generators as $s_0,s_{n-1},s_{n-2},\ldots,s_1$.
Theorem~\ref{thm:general} yields the matrix
\[
\begin{pmatrix}
X_1^{d+c+1-a}&X_n&x_{n-1}&\cdots&X_2\\
X_n^c&X_{n-1}^a&X_{n-2}^a&\cdots&X_1^a
\end{pmatrix}.
\]
Interchanging its two rows and reversing the order of the columns gives
exactly \(\X_c\). Neither operation changes the ideal generated by the
\(2\times2\) minors. Hence $I_H=\rmI_2(\X_c)$ in both cases.

Finally, $\htop I_H=n-1$. Since every entry of \(\X_c\) has positive degree, the Eagon--Northcott gives the graded minimal free resolution of $k[H]$.
\end{proof}
\begin{remark}\label{rem:direct}
	For the explicit affine family, the determinantal equality can also be	checked directly. Let $J=I_2(X_c)\subseteq I_H$. For a monomial matrix
	\[
	M=\begin{pmatrix}
		X_1^{\ell_1}&X_2^{\ell_2}&\cdots&X_n^{\ell_n}\\
		X_2^{m_2}&X_3^{m_3}&\cdots&X_1^{m_1}
	\end{pmatrix}
	\]
	of height $n-1$, the Eagon--Northcott Hilbert series,
	or a count of the standard monomials modulo $X_1$, gives
	\[
	\dim_k S/(I_2(M)+(X_1))
	=\ell_2\cdots\ell_n+m_2\ell_3\cdots\ell_n+
	\cdots+m_2\cdots m_n.
	\]
	For $X_c$ we have $(\ell_2,\ldots,\ell_n)=(a,\ldots,a,c)$ and
	$(m_2,\ldots,m_n)=(1,\ldots,1)$, so this length is
	\[ca^{n-2}+ca^{n-3}+\cdots+ca+c+1
	=cA_{n-1}+1=m.\]
	On the other hand, $S/(I_H+(X_1))\cong k[H]/(t^m)$ has $k$-length $m$.
	The surjection between these Artinian quotients is therefore an
	isomorphism. If $I_H/J$ were nonzero, it would be a nonzero finitely
	generated graded module over $S/J$ and would have a nonzero quotient
	modulo $X_1$ by graded Nakayama, a contradiction. Thus $J=I_H$.
	
This proof uses the explicit exponents of $X_c$ and the identity $m=cA_{n-1}+1$ to obtain the required length. In contrast, Theorem \ref{thm:general} constructs the cyclic relations from the pseudo-Frobenius progression and the unique factorization of $h$, then supplies the ideal equality without a separate length calculation. The latter argument makes the structural reason for the determinantal presentation visible and is 	available whenever those two hypotheses can be verified.
\end{remark}

We close this paper with two examples satisfying the hypotheses.

\begin{example}[The case \(b>0\)]
Take \(a=2\), \(n=4\), and \(c=1\). Then $A_3=1+2+4=7$ and $m=A_3+1=8$.
Choose \(b=3\). Then $d=(a-1)m+b=11$ and
\[
H=\langle8,19,41,85\rangle.
\]
We have $\rho=11(8-1)=77$, and hence
\[
\PF(H)=\{77,74,71\}=\{71,74,77\}.
\]
Thus \(h=68\) and \(\alpha=3\). Theorem~\ref{thm:bpositive} gives the factorization $68=8+19+41$, and this is the unique factorization of \(68\). The defining ideal is
\[
I_H=
\rmI_2\begin{pmatrix}
X_1^2&X_2^2&X_3^2&X_4\\
X_2&X_3&X_4&X_1^{11}
\end{pmatrix}.
\]
\end{example}

\begin{example}[The case \(b<0\)]
Again take \(a=2\), \(n=4\), \(c=1\), and \(m=8\), but now let
\(b=-1\). Then $d=7$ and $H=\langle8,15,29,57\rangle$. We have $
\rho=7(8-1)=49$,
so
\[
\PF(H)=\{49,50,51\}.
\]
Hence \(h=48\) and \(\alpha=1\). Theorem~\ref{thm:bnegative} gives the factorization $48=6\cdot8$,
and this is the unique factorization of \(48\). The defining ideal is
\[
I_H=
\rmI_2\begin{pmatrix}
X_1^2&X_2^2&X_3^2&X_4\\
X_2&X_3&X_4&X_1^7
\end{pmatrix}.
\]
\end{example}
\section*{Ackknowledgements}
The author is grateful to Professor Naoyuki Matsuoka for insightful discussions on Conjecture 1.1 over the years. Furthermore, the author sincerely thanks Professors Naoyuki Matsuoka and Toshinori Kobayashi for providing the proofs of Corollary \ref{cor38} and Remark \ref{rem:direct}.

\end{document}